\theoremstyle{plain}
\newtheorem{theorem}{Theorem}
\newtheorem{proposition}[theorem]{Proposition}
\newtheorem{corollary}[theorem]{Corollary}
\newtheorem{lemma}[theorem]{Lemma}
\theoremstyle{definition}
\newtheorem{definition}[theorem]{Definition}
\newtheorem{example}[theorem]{Example}
\newtheorem{remark}[theorem]{Remark}
\newtheorem{remarks}[theorem]{Remarks}
\newcommand{\define}[1]{{\em #1\/}}
\newcommand{\eps}{\epsilon}
\newcommand{\ve}{\varepsilon}
\newcommand{\G}{\Gamma}
\newcommand{\la}{\lambda}
\newcommand{\de}{\delta}
\newcommand{\cB}{{\mathcal B}}
\newcommand{\cD}{{\mathcal D}}
\newcommand{\cF}{{\mathcal F}}
\newcommand{\fg}{{\mathfrak g}}
\newcommand{\cH}{{\mathcal H}}
\newcommand{\cL}{{\mathcal L}}
\newcommand{\fv}{{\mathfrak v}}
\newcommand{\field}[1]{\mathbb{#1}}
\newcommand{\B}{\field{B}}                  
\newcommand{\N}{\field{N}}                  
\newcommand{\R}{\field{R}}                  
\newcommand{\C}{\field{C}}                  
\renewcommand{\G}{\field{G}}                  
\newcommand{\ang}[1]{\langle #1 \rangle}
\newcommand{\ra}{\rightarrow}
\newcommand{\stm}{\setminus}
\newcommand{\loc}{{\rm loc}}
\newcommand{\inn}{{\it i}}
\newcommand{\out}{{\it o}}
\newcommand{\Rinn}{R^{\inn}}
\newcommand{\Rout}{R^{\out}}
\renewcommand{\L}{\mathcal{L}}
\newcommand{\vfi}{\varphi}
\renewcommand{\la}{\lambda}
\DeclareMathOperator{\BMO}{BMO}
\DeclareMathOperator{\supp}{supp}
\DeclareMathOperator{\diam}{diam}
\DeclareMathOperator{\dist}{dist}
\DeclareMathOperator{\Real}{Re}
\DeclareMathOperator{\spa}{span}
\DeclareMathOperator{\Lip}{Lip}
\newcommand{\restrict}{\begin{picture}(12,12)
                        \put(2,0){\line(1,0){8}}
                        \put(2,0){\line(0,1){8}}
                       \end{picture}}
\let\c@equation\c@theorem       
\numberwithin{theorem}{section}
\title{Removable sets for homogeneous linear PDE in Carnot groups}
\author{Vasilis Chousionis}
\address{Department of Mathematics \\ University of Illinois \\ 1409
  West Green St. \\ Urbana, IL, 61801}
\email{vchous@illinois.edu}
\author{Jeremy T. Tyson}
\address{Department of Mathematics \\ University of Illinois \\ 1409
  West Green St. \\ Urbana, IL, 61801}
\email{tyson@illinois.edu}
\date{\today}
\thanks{JTT supported by NSF grant DMS-1201875.}
\keywords{Carnot group, homogeneous partial differential operator, removable set, $\BMO$, H\"older continuity, Carnot--Carath\'eodory metric, Hausdorff measure}
\begin{document}
\bibliographystyle{acm}
\maketitle

\begin{abstract}
Let $\cL$ be a homogeneous left invariant differential operator on a Carnot group.
Assume that both $\cL$ and $\cL^t$ are hypoelliptic. We study the removable sets for $\cL$-solutions. We give precise conditions in terms of the Carnot--Carath\'eodory Hausdorff dimension for the removability for $\cL$-solutions under several auxiliary integrability or regularity hypotheses. In some cases, our criteria are sharp on the level of the relevant Hausdorff measure. One of the main ingredients in our proof is the use of novel local self similar tilings in Carnot groups.
\end{abstract}

\section{Introduction}\label{sec:intro}

A fundamental problem in partial differential equations is to understand the size of the sets which are removable for solutions to a given PDE. Such size is typically quantified in terms of both the order of the PDE and the regularity of the solution. Results of this type are well understood in the Euclidean setting, but have received less extensive treatment in more abstract contexts such as sub-Riemannian manifolds and metric spaces. In this paper we study the removability of sets for homogeneous left invariant differential operators on nilpotent stratified Lie groups (also known as Carnot groups).

\

Let us begin with some background and history of the subject in the Euclidean setting. Painlev\'e's problem---to characterize the removability of sets for bounded analytic functions---motivated much work in geometric measure theory, complex analysis and harmonic analysis throughout the twentieth century. The eventual solution to the Painlev\'e problem was obtained by Tolsa \cite{tol:painleve} following extensive work by many people including Ahlfors, Denjoy, Vitushkin, Garnett, Calder\'on, Mattila, Jones, David, Mel'nikov and Verdera. Tolsa's article \cite{tol:icm}, written for the proceedings of the 2006 ICM, is highly recommended for an informative history and survey on the Painlev\'e problem. 

The complete solution to the Painlev\'e problem is quite subtle and relies on intricate geometric properties of the set to be removed related to the {\it Menger curvature}. Among sets $E \subset \C$ of finite Hausdorff $1$-measure, the removable sets for bounded analytic functions are precisely the purely $1$-unrectifiable sets, i.e, sets $E$ which intersect all Lipschitz curves in sets of null $\cH^1$ measure. This was {\it Vitushkin's conjecture}, established by Guy David \cite{dav:lipanalytic}. The same geometric criterion characterizes removable sets for bounded planar harmonic functions, see David and Mattila \cite{dm:lipharm}.

Removability for H\"older continuous harmonic functions (in any dimension) was characterized in an early paper of Carleson \cite{car:holharm}.
Unlike the more difficult case of bounded harmonic functions, there is a precise characterization of H\"older continuous harmonic removability at the level of the Hausdorff measure: {\it a compact set $E \subset \R^n$ is removable for $\delta$-H\"older continuous harmonic functions if and only if $E$ has zero Hausdorff $(n-2+\delta)$-measure.}

In connection with removability problems for functions in the class $\BMO$, we mention Kaufman's paper \cite{kau:bmo} on removability for $\BMO$ analytic functions, and Ishchanov's results \cite{ish:bmo} on removability for $\BMO$ harmonic functions in higher dimensions. 

A comprehensive treatment of removability for solutions of linear partial differential equations in Euclidean spaces of any dimension was given by Harvey and Polking \cite{hp:pde}. They provide conditions on the level of Hausdorff measure for the removability of sets for solutions of homogeneous partial differential operators, under various ancillary integrability, continuity or regularity assumptions. Our results are strongly influenced by the paper of Harvey and Polking.
The survey article \cite{pol:survey} by Polking contains additional results and references to the literature.

The literature on the subject of removability in the Euclidean setting is vast, and the preceding is only a small sample. In particular, we have said nothing about removability for solutions of nonlinear PDE, including the $p$-harmonic or minimal surface equations, or for removability for, e.g., subharmonic functions or quasiregular mappings.

\

In recent years there has been significant emphasis on the extension of classical Euclidean analysis and geometry into more general non-Riemannian spaces, including sub-Riemannian manifolds and more abstract metric measure spaces. While there have been some extensions of the theory of removable sets as described above into such settings, the story appears to be less well understood. Removability for H\"older continuous $p$-harmonic functions in metric spaces was studied by M\"ak\"al\"ainen \cite{mak:removable}; for bounded $p$-harmonic functions see A. Bj\"orn \cite{bjo:removable}.

In this paper we consider the category of nilpotent stratified Lie groups (also known as {\it Carnot groups}) equipped with the Carnot--Carath\'eodory metric. The theory of subelliptic PDE, dating back to the fundamental work of H\"ormander, Folland, Stein, Nagel, Wainger and others, lies at the foundations of this subject. The book by Folland and Stein \cite{fs:hardy} remains an excellent introduction. Nowadays the study of sub-Riemannian spaces is a rich vein within analysis in metric spaces, with numerous applications across both pure and applied mathematics. Carnot groups appear naturally, both as model examples and as local tangent models for regular sub-Riemannian manifolds. In this paper we present an extensive removability theory for solutions to homogeneous left invariant partial differential equations, in the spirit of Harvey and Polking, on arbitrary Carnot groups.

The prototypical example of a Carnot group is the {\it Heisenberg group}. In \cite{cm:lipschitz}, the first author together with Mattila considered the problem of removability for Lipschitz harmonic functions on the Heisenberg group. As in the Euclidean setting the Lipschitz case presents unique technical difficulties. In a follow-up paper \cite{cmt:lipschitz} we will address the problem of Lipschitz removability in general Carnot groups.

We now turn to a description of the results of this paper. We quantify, in terms of the Carnot--Carath\'eodory Hausdorff dimension, the size of removable sets for solutions to homogeneous linear PDE in general Carnot groups, when the equations in question are defined by homogeneous left-invariant partial differential operators satisfying a hypoellipticity criterion. All of our results are sharp at the level of Hausdorff dimension, and some even indicate the sharp value of the corresponding Hausdorff measure for such removability. We study removability for $\BMO$, H\"older, and $L^p_\loc$ solutions. Our main results are Theorem \ref{bmorem}, Theorem \ref{lipderem} and Theorem \ref{lplocrem}. Related results appear in Remark \ref{extraremarks}.


The results in the paper \cite{hp:pde} by Harvey and Polking rely essentially on the construction of a certain smooth partition of unity which has been used from then on by many authors. The resulting {\it Harvey--Polking Lemma} depends on the dyadic tilings of Euclidean space, hence in order to generalize the Harvey--Polking removability results to Carnot groups one must first construct analogous tilings. Although there are many analogues of dyadic cubes in metric spaces (see e.g, Christ \cite{chr:cubes}), none of them can be employed in order to build a useful sub-Riemannian smooth partition of unity. 

One way to realize the usual local dyadic tilings in $\mathbb{R}^n$ is in terms of a certain self similar set of full measure satisfying the open set condition (i.e. the $n$-dimensional cube of sidelength one is divided into $2^n$ cubes of sidelength $1/2$, and so on). We adopt the same approach in general Carnot groups and obtain self-similar tilings which can be employed efficiently for proving a sub-Riemannian Harvey-Polking Lemma.

This paper is organized as follows. In Section \ref{sec:background} we recall preliminary facts about Carnot groups and left invariant operators. In section \ref{sec:hptil} we are concerned with the existence of self-similar sets forming tilings of compact subsets. We use such tilings to develop a sub-Riemannian version of the Harvey--Polking machinery of smooth partitions of unity. In Section \ref{sec:main} we state and prove theorems on the removability of compact sets for $\BMO$ and H\"older continuous solutions of homogeneous left invariant PDE. These results are phrased in terms of the Hausdorff measure in the Carnot--Carath\'eodory metric, and are sharp. In Section \ref{sec:main2} we consider the case of removability of compact sets for $L^p_\loc$ solutions. These results are no longer sharp on the level of the Hausdorff measure, although they remain sharp on the level of Hausdorff dimension. In Remark \ref{extraremarks} we conclude with some related results for other 
 function classes such as the horizontal Sobolev space $W^{k,p}_{H,\loc}$ of functions with $k$th order horizontal derivatives in $L^p_\loc$, and the Folland--Stein space $C^k_H$ of functions with continuous $k$th order horizontal derivatives.

\

\paragraph{\bf Acknowledgments.} The first author would like to thank Joan Mateu for explaining the proof of the $\BMO$ removability result in the Euclidean setting.

\section{Definitions and notation}\label{sec:background}

\subsection{Carnot groups}\label{subsec:carnot}

A \define{Carnot group} is a connected, simply connected, nilpotent
Lie group $\G$ of dimension at least two with graded Lie algebra
$$
\fg=\fv_1\oplus\cdots\oplus \fv_s
$$
so that $[\fv_1,\fv_i]=\fv_{i+1}$ for $i=1,2,\ldots,s-1$ and
$[\fv_1,\fv_s]=0$. The integer $s \ge 1$ is the \define{step}
of $G$. We denote the group law in $\G$ by $\cdot$ and the identity element of $\G$ by $0$. We identify elements of $\fg$ with left invariant vector fields on $\G$ in the usual
manner.

We fix an inner product in $\fv_1$ and let $X_1,\ldots,X_{m_1}$ be an
orthonormal basis for $\fv_1$ relative to this inner product. Using
this basis, we construct the \define{horizontal subbundle} $H\G$ of
the tangent bundle $T\G$ with fibers $H_p\G=\spa\lbrace X_1(p),
\ldots, X_{m_1}(p)\rbrace$, $p\in \G$. A left-invariant vector field $X$
on $\G$ is \define{horizontal} if it is a section of $H\G$. The inner
product on $\fv_1$ defines a left invariant family of inner products
on the fibers of the horizontal subbundle.

We denote by $d$ the \define{Carnot--Carath\'eodory metric} on $\G$,
defined by infimizing the lengths of horizontal paths joining two
fixed points, where the horizontal length is computed using the
aforementioned inner product.

For $t>0$ we define $\delta_t:\fg\to\fg$ by setting $\delta_t(X)=t^i
X$ if $X\in \fv_i$ and extending by linearity. Via conjugation with
the exponential map, $\delta_t$ induces an automorphism of $\G$ which
we also denote by $\delta_t$. Then $(\delta_t)_{t>0}$ is the
one-parameter family of \define{dilations} of $\G$ satisfying
$d(\delta_t(p),\delta_t(q))=t d(p,q)$ for $p,q\in \G$. The Jacobian
determinant of $\delta_t$ (relative to Haar measure) is everywhere
equal to $t^Q$, where
$$
Q=\sum_{i=1}^s i\dim \fv_i
$$
is the \define{homogeneous dimension} of $\G$.
{\bf In this paper, we always assume $Q\ge 3$.}

As a simply connected nilpotent group, $\G$ is diffeomorphic with
$\fg=\R^D$, $D=\sum_{i=1}^s\dim \fv_i$, via the exponential map. The
Haar measure on $\G$ is induced by the exponential map from Lebesgue
measure on $\fg=\R^D$. It also agrees (up to a constant) with the
$Q$-dimensional Hausdorff measure in the metric space $(\G,d)$. In this paper we will denote the Haar measure of a set $E \subset \G$ by $|E|$, and we will write integrals with respect to this measure as $\int_E f(x) \, dx$. We refer the reader to \cite{mont:tour}, \cite{blu:potential} or \cite{cdpt:survey} for further information on Carnot groups and their metric geometry.

\subsection{Left invariant homogeneous operators}\label{subsec:operators}

As usual for $E \subset \G$ we define $C_c^\infty (E)$ to be the set of all $C^\infty$ functions with compact support contained in $E$. For $U$ open we denote by $\cD'(U)$ the space of distributions on $U$ with the usual locally convex topology. Let also $\cD':=\cD'(\G)$. We will denote by $\langle u,f \rangle$ the pairing of a distribution $u \in \cD'(U)$ with a test function $f \in C_c^\infty (U)$.

A distribution $u \in \cD'(U)$ is homogeneous of degree $\lambda$ if $\langle u, f\circ \delta_r \rangle= r^{-Q-\lambda}\langle u,f \rangle$ for all $f \in C_c^\infty$. A distribution which is $C^\infty$ away from $0$ and homogeneous of degree $\lambda-Q$ will be called kernel of type $\la$. A differential operator $\mathcal{L}$ will be called homogeneous of degree $\la$, or $\la$-homogeneous, if $\L (u \circ \delta_r)=r^\la (\L u) \circ \delta_r$ for all $u \in \cD'$. By the Poincar\'e--Birkhoff--Witt theorem \cite{fs:hardy} and homogeneity, the space of $\la$-homogeneous, left invariant differential operators in $\G$ is the linear span of the operators $X_{i_1}\dots X_{i_\la}$ with $i_k=1,\dots m$ and $k=1,\ldots,\la$.

A differential operator $\cL$ on $\G$ is {\it hypoelliptic} if for any open set $U \subset \G$ and any two distributions $u$ and $v$ on $U$ satisfying $\cL u=v$, $v \in C^\infty(U)$ implies $u \in C^\infty(U)$. We call a function $f$ satisfying $\cL f = 0$ an {\it $\cL$-solution}. (By hypoellipticity, such solutions are automatically smooth.)

\begin{example}
The operator $\cL = \sum_{i=1}^m X_i^2$ is known as the {\it sub-Laplacian on $\G$}. The theory of sub-Laplacians on stratified Lie groups has been extensively developed; we refer to Folland \cite{fol:fundamental} and to the books \cite{fs:hardy}, \cite{blu:potential}. Such operators are self-adjoint, $2$-homogeneous and left invariant. By analogy with the Euclidean case, we call solutions of the equation $\cL f = 0$ {\it $\cL$-harmonic} functions.
\end{example}

\begin{definition}
Let $\cL$ be a linear partial differential operator defined on an open set $U$ contained in a domain $\R^D$, and let $E$ be a closed subset of $U$. Let $\cF(U)$ be a class of distributions defined in $U$. The set $E$ is said to be {\it removable for $\cL$-solutions in the class $\cF$}, or {\it removable for $\cF$ $\cL$-solutions},  if whenever $f \in \cF(U)$ satifies $\cL f |_{U\setminus E} = 0$, then $\cL f = 0$.
\end{definition}

\section{Local tilings  and smooth partitions of unity in Carnot groups}\label{sec:hptil}

\subsection{Dyadic decompositions in Carnot groups}\label{subsec:dyadic}

In this section, we construct essentially disjoint partitions of
compact subsets of a Carnot group $\G$, which are analogous to the
classical dyadic decomposition of $\R^n$.

Our starting point is the construction of a fundamental tile $T
\subset \G$, analogous to the Euclidean unit cube. As in the Euclidean
case, $T$ is decomposed into a family of homothetic copies, each of which has size half the size of $T$. We make use of the
following theorem on the existence of self-similar sets in Carnot
groups. Theorem \ref{self-similar-theorem} is a special case of
Proposition 4.14 in~\cite{btw:dimcomp}. In case the group $\G$ is of step two with rational structure constants (for instance, if $\G$ is the Heisenberg group), explicit tilings of this type were constructed by Strichartz \cite{str:self-sim1}, \cite{str:self-sim2}; the latter tilings were further studied in \cite{bhit:horizfractals} and \cite{tys:gcad}.

\begin{theorem}[Existence of self-similar
  tiles]\label{self-similar-theorem}
Let $\G$ be a Carnot group of homogeneous dimension $Q$. There exist
$\tfrac12$-homotheties $f_1,\ldots,f_M \in \G$, $M = 2^Q$, and a
compact set $T \subset \G$ so that $T = \bigcup_{j=1}^M T_j$ where
$T_j=f_j(T)$. Moreover,
\begin{equation}\label{ssth-eqn}
0< |T| < \infty
\end{equation}
and $|T_j \cap T_{j'}| = 0$ whenever $j \ne j'$.
\end{theorem}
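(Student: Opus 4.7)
The plan is to realize $T$ as the attractor of a contractive iterated function system consisting of $M=2^Q$ maps of the form $f_j(x) = g_j \cdot \delta_{1/2}(x)$ for carefully chosen digits $g_1,\ldots,g_M \in \G$. Each $f_j$ is a $\tfrac12$-Lipschitz self-map of the complete metric space $(\G,d)$, since $\delta_{1/2}$ scales $d$ by $\tfrac12$ and left translations are $d$-isometries. Hutchinson's fixed-point argument (which transfers verbatim from Euclidean space to any complete metric space) then produces a unique nonempty compact set $T \subset \G$ satisfying $T = \bigcup_{j=1}^M f_j(T)$.

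The remaining content is to choose the digits $g_j$ so that the IFS satisfies the open set condition (OSC); once that is achieved, the similarity dimension equals $\log M / \log 2 = Q$, and a standard Hutchinson--Moran type argument (valid in $(\G,d)$ because all $f_j$ share the same contraction ratio and because $d$-Hausdorff $Q$-measure is comparable to Haar measure) yields $0 < \cH^Q(T) < \infty$, hence $0<|T|<\infty$; the overlap condition $|T_j \cap T_{j'}|=0$ for $j \ne j'$ likewise follows from OSC. The natural strategy for producing the digits is to seek a discrete subset $\La \subset \G$ with $\delta_2(\La) \subset \La$ of index $2^Q$, and to take $\{g_1,\ldots,g_M\}$ to be a set of coset representatives for $\delta_2(\La)$ in $\La$. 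The count $2^Q$ is forced by the fact that $\delta_2$ scales the $i$-th stratum by $2^i$, so that the ``dilation index'' is $\prod_{i=1}^{s} 2^{i\dim\fv_i} = 2^Q$. The open set $U$ witnessing OSC can then be taken to be a bounded fundamental domain for the translation action of $\La$.

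To verify OSC, I would work in exponential coordinates, where $\G$ is identified with $\R^D$ and the group law is given by a polynomial of BCH type that is lower triangular with respect to the stratification. This triangularity lets one construct $\La$ and its digit set inductively stratum by stratum: in each stratum $\fv_i$, one chooses $\dim \fv_i$ independent binary digits whose $\delta_2$-images modulo the next layer behave like the standard Euclidean digits $\{0,1\}^{\dim \fv_i}$. Picking $U$ as the interior of the closure of the set of ``base-$\tfrac12$ expansions'' with these digits, one checks by a direct comparison that $f_j(U) \subset U$ and that $f_j(U) \cap f_{j'}(U) = \emptyset$ for $j\ne j'$, precisely because distinct digits live in distinct cosets of $\delta_2(\La)$.

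The main obstacle is the construction of the digit set in a general Carnot group, where no global lattice may exist (Malcev's criterion requires rational structure constants). In the rational case (including all step-$2$ groups and in particular the Heisenberg group) one can take the explicit Strichartz tilings \cite{str:self-sim1,str:self-sim2}; the technical work lies in removing the rationality hypothesis, which is exactly what is done in Proposition 4.14 of \cite{btw:dimcomp} via the inductive stratum-by-stratum construction sketched above. Once the digits are in place, the rest (existence of $T$, OSC, and the measure estimate \eqref{ssth-eqn}) is formal.
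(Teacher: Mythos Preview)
Your proposal is correct and aligns with the paper, which does not give an independent proof but simply cites this theorem as a special case of Proposition~4.14 in \cite{btw:dimcomp} (and the Strichartz tilings \cite{str:self-sim1,str:self-sim2} in the rational step-two case). Your sketch of the Hutchinson attractor construction with $2^Q$ digits chosen stratum-by-stratum, OSC via a fundamental domain, and the resulting measure bound is exactly the content of that cited result, so there is nothing to add.
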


A \define{$t$-homothety} of $\G$ is the composition of a fixed left
translation with the dilation $\delta_t$.

Let $W=\{1,\ldots,M\}$. For $m\ge 0$ and $w=w_1\cdots w_m \in W^m$ we
introduce the notation
\begin{equation}\label{fw}
f_w = f_{w_1}\circ\cdots\circ f_{w_m}
\end{equation}
and $T_w=f_w(T)$. We denote by $\cD_m$ the family of sets $T_w$ as $w$
ranges over $W^m$.

\begin{proposition}\label{T-property-1}
The set $T$ is the closure of an open set.
\end{proposition}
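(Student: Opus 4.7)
The plan is to prove the equivalent reformulation $T = \overline{\operatorname{int}(T)}$, split into (A) showing $\operatorname{int}(T) \ne \emptyset$ and (B) showing every point of $T$ is a limit of interior points. Step (B) is a quick consequence of (A) via the self-similar iteration, so I would spend essentially all the effort on (A).

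For (B), fix $v_0 \in \operatorname{int}(T)$, a target $p \in T$, and $\eps > 0$. Choose $m$ with $2^{-m}\diam(T) < \eps$ and $w \in W^m$ with $p \in T_w$. The map $f_w$ is a homeomorphism of $\G$ (it is the composition of a left translation with $\delta_{2^{-m}}$), so $f_w(\operatorname{int}(T))$ is an open subset of $\G$; it is also contained in $f_w(T) = T_w \subset T$, hence in $\operatorname{int}(T)$. Since $\diam(T_w) = 2^{-m}\diam(T) < \eps$ and $p \in T_w$, the point $f_w(v_0)$ lies in $\operatorname{int}(T) \cap \overline{B}(p,\eps)$, which gives $T \subset \overline{\operatorname{int}(T)}$; the reverse inclusion is automatic since $T$ is closed.

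For (A), which is the heart of the matter, I would argue by contradiction, assuming $\operatorname{int}(T) = \emptyset$. The key numerical input is the balance $M \cdot (1/2)^Q = 1$: the similarity dimension of the IFS equals the homogeneous dimension, which matches the $Q$-homogeneity of Haar measure under $\delta_t$. By Lebesgue differentiation for Haar measure on $(\G,d)$ (CC-balls form a Vitali basis in Carnot groups), almost every $p \in T$ satisfies $|B(p,r) \cap T|/|B(p,r)| \to 1$ as $r \to 0$. After discarding the Haar-null set $\bigcup_m \bigcup_{w \ne w' \in W^m}(T_w \cap T_{w'})$, pick such a $p$ with a unique nested address: for every $m$ there is a unique $w_1 \cdots w_m \in W^m$ with $p \in T_{w_1 \cdots w_m}$, and $w_1 \cdots w_{m+1}$ extends $w_1 \cdots w_m$. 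Set $g_m := f_{w_1 \cdots w_m}^{-1}$, a $2^m$-homothety of $\G$, and $q_m := g_m(p) \in T$. A direct change of variables using $|g_m(A)| = 2^{mQ}|A|$ and $g_m(B(p,r)) = B(q_m, 2^m r)$ yields, for every fixed $R > 0$,
\begin{equation*}
\frac{|g_m(T) \cap B(q_m, R)|}{|B(q_m, R)|} \;=\; \frac{|T \cap B(p, R/2^m)|}{|B(p, R/2^m)|} \;\longrightarrow\; 1 \quad \text{as } m \to \infty.
\end{equation*}
Moreover, $g_m(T) = \bigcup_{w \in W^m} g_m(T_w)$ decomposes into $2^{mQ}$ essentially disjoint congruent copies of $T$, each of diameter $\diam(T)$, one of which is $T$ itself (since $g_m(T_{w_1\cdots w_m}) = T$, so $g_m(T) \supset T$).

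The main obstacle is converting this measure-theoretic density-one information into the topological conclusion that $T$ contains an open ball. My plan is to extract a subsequential limit $q_m \to q^* \in T$, so that $g_m(T)$ covers $B(q^*, R)$ up to vanishing measure for every $R$; then to observe that, at each $m$, only boundedly many of the $2^{mQ}$ translate-copies of $T$ making up $g_m(T)$ can meet $B(q^*, R)$ (at most $|B(q^*, R + \diam T)|/|T|$, by essential disjointness); and to combine this uniform finiteness with the self-similar arrangement to force at least one copy (hence $T$ itself) to contain an open set. An alternative, conceptually cleaner route would be to invoke a Schief-type theorem for self-similar sets in Carnot groups, asserting that the measure-theoretic open set condition together with similarity dimension equal to the homogeneous dimension $Q$ implies $\operatorname{int}(T) \ne \emptyset$; such a statement should be implicit in the construction of \cite{btw:dimcomp} underlying Theorem \ref{self-similar-theorem}, and would reduce (A) to a citation.
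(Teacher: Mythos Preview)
Your reduction to (A) and (B) is sound, and (B) is correct. For (A), the density/blow-up setup is valid and can in fact be pushed through, but you stop at the hard part. The obstacle you name is real, and ``combine uniform finiteness with the self-similar arrangement to force interior'' is not yet an argument. What is missing: from the uniformly bounded number $N$ of translates $a_i^{(m)}\cdot T$ of $T$ that make up $g_m(T)\cap B(q_m,R)$, extract (along a subsequence) limits $a_i^{(m)}\to a_i$; use that $a_i^{(m)}\cdot T \to a_i\cdot T$ in Hausdorff distance together with $|B(q_m,R)\setminus g_m(T)|\to 0$ to deduce $|B(q^*,R)\setminus\bigcup_{i=1}^N a_i\cdot T|=0$; since the finite union $\bigcup_i a_i\cdot T$ is closed, its open complement in $B(q^*,R)$ has measure zero and is therefore empty, giving $B(q^*,R)\subset\bigcup_i a_i\cdot T$; then Baire category forces some $a_i\cdot T$, hence $T$, to have nonempty interior. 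None of these steps is in your write-up, and without them the plan is only a heuristic.

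The paper takes precisely the route you call the ``alternative.'' It cites a result of Balogh--Rohner (the Carnot-group analogue of Schief's theorem) to pass from $0<|T|<\infty$ to the open set condition, with witnessing bounded open set $O$, and then proves directly that $T=\overline{O}$: one has $T\subset\overline{O}$ since $T$ is the attractor and $\overline{O}$ is forward-invariant under the IFS; if $V:=O\setminus T$ were a nonempty open set, then from $|O\setminus\bigcup_j f_j(O)|=0$ and shrinking diameters some iterate $f_w(O)$ lies inside $V$, forcing the fixed point of $f_w$ (which lies in $T$) into $V$, a contradiction. This bypasses density points and blow-ups entirely and identifies $T$ as the closure of a specific open set rather than merely of its own interior.
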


In view of Proposition \ref{T-property-1} we may select a point $p \in
T$ and radii $0<\Rinn<\Rout$ so that $B(p,\Rinn) \subset T
\subset B(p,\Rout)$. Fixing $p$, we assume that $\Rinn$ has been
chosen as large as possible and $\Rout$ has been chosen as small as
possible subject to the preceding constraint. Then $\Rout < \diam T$. We call $p$ the
\define{center} of $T$ and $\Rinn$ and $\Rout$ the \define{inner} and
\define{outer radii} of $T$, respectively. For the remainder of the
paper, we fix this data.

\begin{proof}[Proof of Proposition \ref{T-property-1}]
By \cite[Theorem 3.1]{br:doubling}, the assertion in \eqref{ssth-eqn}
implies that the iterated function system (IFS) $\{f_1,\ldots,f_M\}$
satisfies the open set condition (OSC). Choose a bounded open set $O$
so that $f_j(O) \subset O$ for all $j$ and the sets $\{f_j(O)\}$ are
pairwise disjoint. Note that $0<|O|<\infty$. Since $T$
is the invariant set for this IFS, $T \subset \overline{O}$. We claim
that $T = \overline{O}$. If not, $V := O \setminus T$ is a nonempty open set. Since $|O|$ is positive and finite,
$$
\bigl| O \setminus \cup_{j=1}^M f_j(O) \bigr| = 0.
$$
For sufficiently large $m$, there exists $w \in W^m$ so that $f_w(O)
\subset V$. Then the corresponding fixed point $x_w$ for $f_w$ is
contained in $V$. However, since $T$ is the invariant set for the IFS,
$x_w \in T$. This is the desired contradiction.
\end{proof}

For each $w \in W^m$, $m\ge 0$, we define the center of the \define{tile}
$T_w$ to be $p_w = f_w(p)$, and the inner and outer radii of $T_w$ to be
$$
\Rinn_w = 2^{-m}\Rinn \quad \mbox{and} \quad \Rout_w=2^{-m}\Rout,
$$
respectively. We have $B(p_w,\Rinn_w) \subset T_w \subset B(p_w,\Rout_w)$.

The Hausdorff measures and dimensions of bounded subsets of $\G$ can
be computed using a ``dyadic Hausdorff measure'' constructed using the
tiles $\{T_w\}$. By applying a dilation, we may restrict our attention
to subsets of the initial tile $T$. Let $\cD_* = \cup_{m\ge 0} \cD_m$.
For a set $A \subset T$ and $s,\eps>0$, define
$$
\cH_{\cD,\eps}^s(A) = \inf \sum_i (\diam T_{w_i})^s
$$
where the infimum is taken over all coverings of $A$ by tiles $T_{w_i}
\in \cD_*$ with $\diam T_{w_i} < \eps$. Define $\cH_\cD^s(A) =
\lim_{\eps\to0} \cH_{\cD,\eps}^s(A)$. For each $s>0$, $\cH_\cD^s$ is a
Borel measure on $T$ \cite{mat:geometry}. If the intersection of two
tiles has nonempty interior, then one of the tiles is contained inside
the other. As a result we can without loss of generality restrict our
attention in the definition of $\cH_{\cD}^s$ to essentially disjoint
coverings.

\begin{proposition}[Equivalence of Hausdorff and dyadic Hausdorff
  measure]\label{T-property-2}
For each $s>0$ there exists a constant $C=C(s)>0$ so that
$\cH^s(A) \le \cH^s_\cD(A) \le C \cH^s(A)$ for every $A \subset T$.
\end{proposition}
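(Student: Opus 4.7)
The first inequality $\cH^s(A) \le \cH^s_\cD(A)$ is immediate: any covering of $A$ by tiles $T_{w_i} \in \cD_*$ with $\diam T_{w_i} < \eps$ is a valid admissible covering for the standard definition of $\cH^s_\eps$, so restricting to this smaller class of covers in the infimum can only increase the value, and sending $\eps \to 0$ gives the bound. The content of the proposition is therefore the reverse inequality.

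For $\cH^s_\cD(A) \le C(s)\, \cH^s(A)$, my plan is to convert an arbitrary $\eps$-cover $\{U_i\}$ of $A$ by sets with $\diam U_i < \eps$ into a dyadic cover of comparable $s$-content. For each $i$, I select the unique scale $m_i \ge 0$ satisfying $2^{-m_i}\Rout \le \diam U_i < 2^{-m_i+1}\Rout$, and let $\cF_i \subset \cD_{m_i}$ be the collection of level-$m_i$ tiles meeting $U_i \cap T$. Since $\cD_{m_i}$ tiles $T$, the families $\cF_i$ collectively cover $A$, and each resulting tile has diameter at most $2 \cdot 2^{-m_i}\Rout \le 2\diam U_i < 2\eps$. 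The key step, which I expect to be the main obstacle, is then a packing estimate $\#\cF_i \le N$ for a constant $N = N(Q, \Rinn, \Rout)$ independent of $i$.

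For the packing estimate, I fix $x_i \in U_i$ and note that every $T_w \in \cF_i$ satisfies $T_w \subset B(x_i, 3\diam U_i)$, since $\diam T_w \le 2\diam U_i$ and $T_w$ meets $U_i$. The inner balls $B(p_w, 2^{-m_i}\Rinn) \subset T_w$ for $T_w \in \cF_i$ are pairwise disjoint: any nonempty intersection of two such open balls would contribute positive Haar measure to $T_w \cap T_{w'}$, contradicting the essential disjointness guaranteed by Theorem \ref{self-similar-theorem}. Using the scaling $|B(x,r)| = r^Q |B(0,1)|$ of Haar measure under Carnot dilation (a consequence of left invariance and the Jacobian formula for $\delta_t$), each inner ball has volume bounded below by $|B(0,1)|\,(\Rinn/(2\Rout))^Q (\diam U_i)^Q$, while all of them pack inside the single ball $B(x_i, 3\diam U_i)$ of volume $|B(0,1)|\,(3\diam U_i)^Q$. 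The volume quotient yields $\#\cF_i \le 6^Q (\Rout/\Rinn)^Q =: N$. Combining, $\cH^s_{\cD, 2\eps}(A) \le N \cdot 2^s \sum_i (\diam U_i)^s$; taking the infimum over admissible covers and letting $\eps \to 0$ completes the proof with $C(s) = 2^s N$.
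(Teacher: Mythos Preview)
Your proof is correct and follows essentially the same approach as the paper. The paper first passes to the spherical Hausdorff measure $\cH^s_\cB$ (comparable to $\cH^s$) and then invokes Lemma~\ref{T-property-3} to bound the number of level-$m$ tiles meeting a ball of radius $\simeq 2^{-m}$; you instead work directly with general covers and prove the packing bound inline via the Haar-measure scaling $|B(x,r)|=r^Q|B(0,1)|$ rather than appealing to the doubling property, but the underlying mechanism---disjoint inner balls packed into a ball of comparable radius---is identical.
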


\begin{lemma}\label{T-property-3}
There exists a constant $K>0$ so that for any ball $B(q,r)$ with $r\le
1$ and $m\ge 0$ chosen so that $2^{-m-1} \le r < 2^{-m}$, it holds
that the number of tiles $T_w \in \cD_m$ which intersect $B(q,r)$ is
at most $K$.
\end{lemma}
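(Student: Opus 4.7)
The plan is to prove this by a standard volume-packing argument, using the inner balls $B(p_w, \Rinn_w)$ as ``separators'' between distinct tiles at the same level.

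First I would observe that the tiles at level $m$ are essentially disjoint: this follows by induction on $m$ from the hypothesis $|T_j \cap T_{j'}|=0$ ($j\ne j'$) in Theorem \ref{self-similar-theorem}, using that each $f_j$ is a $\tfrac12$-homothety (hence maps null sets to null sets). Consequently, for $w \ne w'$ in $W^m$, the open balls $B(p_w,\Rinn_w)$ and $B(p_{w'},\Rinn_{w'})$ are contained in tiles whose intersection is a null set; but the intersection of two open balls is itself open, and an open null set is empty, so the inner balls at level $m$ are pairwise disjoint.

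Next I would carry out the packing estimate. Suppose $T_w \in \cD_m$ meets $B(q,r)$, and pick $x \in T_w \cap B(q,r)$. Since $T_w \subset B(p_w,\Rout_w)$ and $B(p_w,\Rinn_w) \subset T_w$, every $y \in B(p_w,\Rinn_w)$ satisfies
\[
d(q,y) \le d(q,x) + d(x,p_w) + d(p_w,y) < r + \Rout_w + \Rinn_w < r + 2\Rout_w \le 2^{-m}(1+2\Rout),
\]
using $r < 2^{-m}$, $\Rinn_w < \Rout_w = 2^{-m}\Rout$. Hence every such inner ball is contained in the single ball $B(q, 2^{-m}(1+2\Rout))$.

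Finally I would compare Haar measures. Recalling that $|B(x,t)| = t^Q |B(0,1)|$ (by left invariance and the $Q$-homogeneity of Haar measure under $\delta_t$), the pairwise disjointness of the inner balls gives, for $N$ the number of tiles $T_w \in \cD_m$ meeting $B(q,r)$,
\[
N\cdot (2^{-m}\Rinn)^Q\,|B(0,1)| \;=\; \sum_{w} |B(p_w,\Rinn_w)| \;\le\; |B(q,2^{-m}(1+2\Rout))| \;=\; (2^{-m}(1+2\Rout))^Q\,|B(0,1)|.
\]
Cancelling $2^{-mQ}|B(0,1)|$ yields $N \le ((1+2\Rout)/\Rinn)^Q$, and we may take $K$ to be this constant, which depends only on the fixed data $\Rinn,\Rout$ and on $Q$.

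There is no real obstacle here; the only subtlety worth double-checking is the disjointness of the open inner balls, which in turn relies on the inductive essential disjointness of tiles at every level, an argument that is straightforward once one notes that $f_j$ preserves Haar-null sets.
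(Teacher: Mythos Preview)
Your argument is correct and follows essentially the same packing strategy as the paper: show that the inner balls $B(p_w,\Rinn_w)$ of the intersecting tiles are pairwise disjoint and all contained in a single ball of radius comparable to $2^{-m}$, then compare volumes. The only cosmetic difference is that the paper invokes the doubling property of $(\G,d)$ to bound $N$, whereas you use the explicit formula $|B(x,t)|=t^Q|B(0,1)|$; both yield a constant depending only on $(1+2\Rout)/\Rinn$, and your added justification for the disjointness of the inner balls fills in a detail the paper leaves implicit.
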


\begin{corollary}\label{T-property-4}
For each $m\ge 0$, no point of $\G$ lies in more than $K$ of the
tiles in $\cD_m$.
\end{corollary}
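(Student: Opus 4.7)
The plan is to derive Corollary \ref{T-property-4} as an immediate consequence of Lemma \ref{T-property-3} by observing that "containing a given point" is a special case of "intersecting a small ball centered at that point." Concretely, fix $m \ge 0$ and an arbitrary point $q \in \G$, and consider the ball $B(q, r)$ with $r = 2^{-m-1}$. Since $m \ge 0$ we have $r \le 1/2 \le 1$, and trivially $2^{-m-1} \le r < 2^{-m}$, so the hypotheses of Lemma \ref{T-property-3} are satisfied with this choice of $m$ and $r$.

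Now suppose $T_w \in \cD_m$ is a tile with $q \in T_w$. Then $q \in T_w \cap B(q, 2^{-m-1})$, so in particular $T_w$ intersects $B(q, 2^{-m-1})$. Hence the set of tiles in $\cD_m$ containing $q$ is a subset of the set of tiles in $\cD_m$ meeting $B(q, 2^{-m-1})$, and Lemma \ref{T-property-3} bounds the cardinality of the latter by the constant $K$. Since $q \in \G$ was arbitrary, the same $K$ works for every point, which is exactly the statement of the corollary.

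The only conceivable obstacle is whether one needs to worry about the hypothesis $r \le 1$ in the lemma when $m = 0$; this is not an issue since $2^{-m-1} = 1/2 < 1$ for $m = 0$, so the constant $K$ supplied by the lemma is the same constant that works for the corollary. No quantitative refinement, no extra combinatorics, and no use of the self-similar structure beyond what already appears in Lemma \ref{T-property-3} is required.
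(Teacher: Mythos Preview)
Your proof is correct and matches the paper's approach: the paper states the corollary immediately after Lemma \ref{T-property-3} without a separate proof, treating it as an obvious consequence, and your argument with $r = 2^{-m-1}$ is exactly the natural way to make this explicit.
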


\begin{proof}[Proof of Lemma \ref{T-property-3}]
Assume that $T_{w_1},\ldots,T_{w_N}$ is a collection of tiles in
$\cD_m$, each of which intersects $B(q,r)$.
Then $B(p_{w_1},\Rinn_{w_1}),\ldots,B(p_{w_N},\Rinn_{w_N})$ is a
collection of disjoint balls, all contained in the ball $B(q,r+2^{-m}
\diam T)$ and all with radius $2^{-m}\Rinn$. Since
$$
r+2^{-m} \diam T < 2^{-m}(1+2\Rout)
$$
and $\G$ is a doubling metric space, we conclude that $N$ is bounded
above by a constant depending only on $(1+2\Rout)/\Rinn$.
\end{proof}

\begin{proof}[Proof of Proposition \ref{T-property-2}]
It suffices to prove the upper bound.

Since the spherical Hausdorff measure $\cH^s_\cB$ (defined using
coverings by balls) and the usual Hausdorff measure $\cH^s$ are
comparable, it suffices to prove that $\cH^s_{\cD,C\eps}(A) \le C
\bigl( \cH^s_{\cB,\eps}(A) + \eps \bigr)$ for a suitable constant $C$
and for all $\eps>0$.

Assume $A$ is covered by balls $\{B_i\}_i$ with $\diam B_i <
\eps$ and $\sum_i (\diam B_i)^s < \cH^s_{\cB,\eps}(A) + \eps$. By
Lemma \ref{T-property-3}, each $B_i$ is covered by tiles
$\{T_{i,j}:1\le j\le N\}$, where $N$ is bounded independent of $i$ and
$\diam T_{i,j} \simeq \diam B_i$ for all $1\le j \le N$. Then $A$ is
covered by the tiles $\{T_{i,j}\}_{i,j}$ and $\diam T_{i,j} \le C\eps$
for all $i$ and $j$, for some fixed $C$. We obtain
$$
\cH^s_{\cD,C\eps}(A) \le \sum_{i,j} (\diam T_{i,j})^s \le C \sum_i
(\diam B_i)^s \le C \left( \cH^s_{\cB,\eps}(A) + \eps \right).
$$
This completes the proof.
\end{proof}

The dyadic Hausdorff measures $\cH^s_\cD$ are a special case of the
general notion of ``comparable net measures'' introduced by Davies and
Rogers, see for instance \cite[\S2.7]{rog:hausdorff} or \cite[pp.\
64--74]{fal:geometry-of}.

\subsection{Smooth partitions of unity in Carnot groups and a sub-Riemannian Harvey--Polking Lemma}\label{subsec:harvey-polking}

The following lemma is inspired by \cite[Lemma 3.1]{hp:pde}.

\begin{lemma}[Harvey--Polking partition of unity]\label{T-property-7}
Let $\{T_{w_i}:1\le i\le N\}$ be a finite collection of essentially
disjoint tiles, with $T_{w_i} \in \cD_{m(i)}$. For each $i$ there is a
function $\varphi_i \in C^\infty_0(\G)$, supported in
$B(p_{w_i},2\Rout_{w_i})$, so that
$$
\sum_{i=1}^N \varphi_i(q) = 1 \quad \mbox{for all $q \in
  \bigcup_{i=1}^N T_{w_i}$.}
$$
Moreover, for each multi-index $\alpha$ there exists a constant
$C_\alpha>0$ so that
$$
|X_\alpha \varphi_i(x)| \le C_\alpha 2^{m(i)|\alpha|} \quad
\mbox{for all $x \in \G$ and $1\le i\le N$.}
$$
\end{lemma}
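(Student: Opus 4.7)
The plan is to reproduce the Harvey--Polking construction using the dilation and left-translation structure of $\G$ in place of Euclidean scaling.

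First, I would fix once and for all a bump function $\psi_0 \in C_c^\infty(\G)$ with $0\le\psi_0\le 1$, $\psi_0\equiv 1$ on $\overline{B(0,\Rout)}$ and $\supp\psi_0\subset B(0,2\Rout)$; such a $\psi_0$ exists via standard Euclidean mollification applied to a smooth cut-off of the appropriate Carnot--Carath\'eodory ball, since smoothness on $\G=\R^D$ is the same notion in either sense. For each $i$ set
\[
\psi_i(q)=\psi_0\bigl(\delta_{2^{m(i)}}(p_{w_i}^{-1}\cdot q)\bigr).
\]
Left-invariance of the metric and the identity $\delta_t(B(0,r))=B(0,tr)$ yield at once $\psi_i\equiv 1$ on $B(p_{w_i},\Rout_{w_i})\supset T_{w_i}$ and $\supp\psi_i\subset B(p_{w_i},2\Rout_{w_i})$. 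Because the $X_j$ are left-invariant and satisfy $X_\alpha(f\circ\delta_t)=t^{|\alpha|}(X_\alpha f)\circ\delta_t$, one obtains $|X_\alpha\psi_i|\le \|X_\alpha\psi_0\|_\infty\, 2^{m(i)|\alpha|}$ directly.

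Next I would reindex so that $m(1)\le m(2)\le\cdots\le m(N)$ (largest tiles first) and define
\[
\varphi_i=\psi_i\prod_{j<i}(1-\psi_j).
\]
A routine induction gives $\sum_{i=1}^k\varphi_i=1-\prod_{i=1}^k(1-\psi_i)$. At any $q\in\bigcup_i T_{w_i}$ some $\psi_{i_0}(q)=1$, so the product vanishes and $\sum_{i=1}^N\varphi_i(q)=1$. The support statement $\supp\varphi_i\subset B(p_{w_i},2\Rout_{w_i})$ is immediate from $\supp\varphi_i\subset\supp\psi_i$.

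The crux, and the step I expect to be the main obstacle, is the derivative estimate. Expanding $X_\alpha\varphi_i$ by the Leibniz rule produces terms of the form $(X_{\alpha_0}\psi_i)\prod_{j<i}X_{\alpha_j}(1-\psi_j)$ with $\alpha_0+\sum_j\alpha_j=\alpha$. Each differentiated factor is bounded by $C_{\alpha_j}2^{m(j)|\alpha_j|}$, undifferentiated ones by $1$, and the ordering $m(j)\le m(i)$ collapses the exponential contributions into $2^{m(i)|\alpha|}$. What needs care is absorbing the combinatorial count of nontrivial Leibniz terms into a constant $C_\alpha$ that depends only on $\G$ and $\alpha$. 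The key geometric observation is that a factor $X_{\alpha_j}(1-\psi_j)$ with $|\alpha_j|\ge 1$ vanishes unless $q\in B(p_{w_j},2\Rout_{w_j})$, while essential disjointness of the tiles forces the inner balls $B(p_{w_j},\Rinn_{w_j})$ to be pairwise disjoint (an open null intersection is empty). Packing these disjoint inner balls inside a single enlarged ball around $q$ at each dyadic scale, in the spirit of Lemma \ref{T-property-3} and using doubling of $(\G,d)$, bounds the number of contributing indices $j$ scale by scale by a constant depending only on $\Rinn/\Rout$ and the doubling constant; since at most $|\alpha|$ factors can be differentiated at once, this yields the required uniform estimate $|X_\alpha\varphi_i(q)|\le C_\alpha 2^{m(i)|\alpha|}$.
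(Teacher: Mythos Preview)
Your construction is the same as the paper's: a fixed bump rescaled and translated to produce $\psi_i$, tiles ordered from largest to smallest, and $\varphi_i=\psi_i\prod_{j<i}(1-\psi_j)$. The partition-of-unity and support claims are correctly argued.

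The gap is in the derivative estimate. You bound each differentiated factor by $C_{\alpha_j}2^{m(j)|\alpha_j|}$ and then immediately ``collapse'' via $m(j)\le m(i)$ to get $2^{m(i)|\alpha|}$ for every nonvanishing Leibniz term; you then try to control the \emph{number} of such terms using bounded overlap per scale together with the fact that at most $|\alpha|$ factors are differentiated. This does not yield a bound independent of $i$: the set $J(q)=\{j<i:q\in B(p_{w_j},2\Rout_{w_j})\}$ has at most $K$ elements \emph{per dyadic scale}, but there can be on the order of $m(i)$ scales below $m(i)$, so $|J(q)|$ can be as large as $Km(i)$, and the count of ways to pick up to $|\alpha|$ differentiated indices from $J(q)$ grows like $(Km(i))^{|\alpha|}$. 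Your two observations (bounded overlap per scale, $\le|\alpha|$ differentiated factors) are both correct but do not combine to the stated conclusion.

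The fix is to keep the scale information rather than collapsing it. The paper estimates $\theta_i=1-\prod_{k\le i}(1-\psi_k)$ (and recovers $\varphi_i=\theta_i-\theta_{i-1}$): expanding $X_\alpha\theta_i$ and \emph{decoupling} the sums over the differentiated indices reduces matters to bounding $\sum_{\nu\le i}|X_\beta\psi_\nu(q)|$ for each nonzero sub-index $\beta$. Now bounded overlap per scale feeds a \emph{geometric series over scales}: the scale-$m$ contribution is $\lesssim K\,2^{m|\beta|}$, and since $|\beta|\ge1$ the sum $\sum_{m\le m(i)}2^{m|\beta|}$ is dominated by its last term $2^{m(i)|\beta|}$. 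Multiplying over the at most $|\alpha|$ factors gives $C_\alpha 2^{m(i)|\alpha|}$ with $C_\alpha$ depending only on $K$ and $\alpha$. All the ingredients are in your sketch except this geometric-series step; once you retain the scale-dependent bounds and sum, the argument closes.
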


Here, for a multi-index $\alpha=(\alpha_1,\ldots,\alpha_\ell) \in
\{1,\ldots,m_1\}^\ell$ (recall that $m_1$ denotes the dimension of the
horizontal layer $\fv_1$ of $\G$) we write
$$
X_\alpha = X_{\alpha_1}X_{\alpha_2}\cdots X_{\alpha_\ell}
$$
and $|\alpha|=\ell$.

\begin{proof}[Proof of Lemma \ref{T-property-7}]
Let $s_i=\diam T_{w_i}$, $i=1,\ldots,N$, and without loss of generality assume that $s_1 \geq s_2 \geq \dots \geq s_N$. Choose $\psi \in C^\infty_0(\G)$ so that $\psi|_T\equiv 1$ and
$\psi|_{\G\setminus B(p,2\Rout)}=0$. For each $i$, define $\psi_i$ by
$$
\psi_i(q) = \psi(f_{w_i}^{-1}(q)),
$$
where $f_w$ is defined in \eqref{fw}. Then $\psi_i \in C^\infty_0(\G)$, $\psi_i|_{T_{w_i}}\equiv 1$ and $\psi_i$ is supported in the ball $B(p_{w_i},2\Rout_{w_i})$. For $1\le i\le N$ define $\varphi_1 = \psi_1$ and $\varphi_{i+1} =
\psi_{i+1}\prod_{k=1}^i(1-\psi_i)$. Then $\varphi_i \in C^\infty_0(\G)$
and $\varphi_i$ is supported in the ball $B(p_{w_i},2\Rout_{w_i})$. Furthermore, $\sum_i \varphi_i = 1$ in the union of the tiles $T_{w_i}$. Let $\theta_i=\sum_{k=1}^i \varphi_i=1-\prod_{k=1}^i (1-\psi_i)$. Since $s_i$ is decreasing it is enough to show
\begin{equation}\label{hptheta}
||X_\alpha \theta_i||_\infty \leq C_\alpha s_i^{-|\alpha|}
\end{equation}
for all $i=1,\dots,N$. Indeed, $X_\alpha \theta_i
=X_\alpha \theta_{i-1} +X_\alpha \varphi_i$, whence (\ref{hptheta}) implies that
$$
||X_\alpha \varphi_i||_\infty \leq ||X_\alpha \theta_i||_\infty +||X_\alpha \theta_{i-1}||_\infty \lesssim s_i^{-|\alpha|}+ s_{i-1}^{-|\alpha|}\lesssim s_i^{-|\alpha|}.
$$
Let
$$
B_\alpha=\left\{ \bar{\beta}=(\beta_1,\dots,\beta_r): \beta_j \ \text{is a multi-index,} \ |\beta_j|\geq 1 \ \text{and} \ \sum_{j=1}^r \beta_j=\alpha \right\}
$$
For every $r$-tuple of multi-indices $\bar{\beta}$ one can assign $i(i-1)\dots(i-r+1)$ $r$-tuples $(\nu_1,\dots,\nu_r)$ in $\{1,\dots,i\}^r$ with all $\nu_j$ distinct. Define the functions
\begin{equation*}
g_{\nu_1, \dots,\nu_r}=
\begin{cases} 0 & \text{if $\nu_j=\nu_k$ for some $j\neq k$}
\\
\prod_{\substack{k\neq \nu_1,\dots,\nu_r \\ k\leq i}}(1-\psi_k) &\text{if all $\nu_j$ are distinct and $r<k$}
\\
-1 &\text{if all $\nu_j$ are distinct and $r=k$.}
\end{cases}
\end{equation*}
Then there exist constants $C_{\beta_1,\dots,\beta_r}$ such that
$$X_\alpha \theta_i=\sum_{(\beta_1,\dots,\beta_r) \in B_\alpha}C_{\beta_1,\dots,\beta_r}\sum_{(\nu_1,\dots,\nu_r)\in\{1,\dots,i\}^r}g_{\nu_1,\dots,\nu_r}(X_{\beta_1} \psi_{\nu_1})\dots(X_{\beta_r} \psi_{\nu_r}) ).$$
Therefore
$$|X_\alpha \theta_i(q)|\le\sum_{(\beta_1,\dots,\beta_r) \in B_\alpha}C_{\beta_1,\dots,\beta_r}\left( \sum_{\nu_1=1}^i |X_{\beta_1} \psi_{\nu_1}(q)|\right)\dots\left(\sum_{\nu_r=1}^i |X_{\beta_r} \psi_{\nu_r}(q)|\right)$$
for all $q \in \G$.

Now consider any of the sums $\sum_{\nu=1}^i |X_{\beta} \psi_{\nu}(q)|$, where $\beta$ is a multi-index and $\nu=1,\dots,i$. Notice that $X_\beta \psi_\nu (q)=0$ if $q \notin B(p_{w_\nu},2 \Rout_{w_v})$. Furthermore for all $i=1,\dots,m$ and $r>0$
\begin{equation*}
\begin{split}
X_i (\psi \circ \delta_r )(q)&= \frac{d}{dt}\psi ((\delta_r(q))\exp(rtX_i))|_{t=0}\\
&=r\frac{d}{dt}\psi ((\delta_r(q))\exp(tX_i))|_{t=0} =r(X_i \psi \circ \delta_r)(q).
\end{split}
\end{equation*}
Hence
$$
X_i \psi_j (q)=X_i(\psi \circ f_j^{-1}(q))=2((X_i \psi)\circ f^{-1}_j)(q)
\quad \mbox{for $j=1,\dots,M$ and $i=1,\dots,m$}
$$
and $||X_\beta \psi_\nu||_\infty \le C s_\nu^{-|\beta|}$ in $B(p_{w_v}, 2 \Rout_{w_v})$. Therefore for all $q \in \G$ we have
$$
\sum_{\nu=1}^i |X_{\beta} \psi_{\nu}(q)|\leq C_\beta \sum_{\substack{\nu \\ s_\nu \geq s_i \ \text{and} \ q \in B(p_{w_\nu},2 \Rout_{w_\nu}) }} |s_\nu|^{-|\beta|}.
$$
Given $q$, as in the proof of Lemma \ref{T-property-3} there exists a finite number $K:=K(\G)$ of tiles $T_{w_1}, \ldots,T_{w_K}$ in $\cD_m$ such that $q \in \bigcap_{l=1}^K B(p_{w_l},2 \Rout_{w_l}).$ Therefore for all $p \in \N$ there exist at most $K$ tiles $T_\nu$ with $\diam T_\nu = 2^p \diam T_i$ such that $q \in B(p_{w_v}, 2 \Rout_{w_v})$. Hence
$$
\sum_{\nu=1}^i |X_{\beta} \psi_{\nu}(q)| \le K C_\beta \sum_{p=0}^\infty (2^p \diam T_i)^{-|\beta|} \lesssim (\diam T_i)^{-|\beta|}
$$
and
$$
|X_\alpha \theta_i(q)| \le\sum_{(\beta_1,\dots,\beta_r) \in B_\alpha}C_{\beta_1,\dots,\beta_r}|s_i|^{-|\beta_1|} \dots|s_i|^{-|\beta_r|}\lesssim |s_i|^{-|\alpha|}.
$$
This completes the proof.
\end{proof}

\section{Removable sets for $\BMO$ and H\"older continuous $\cL$-solutions}\label{sec:main}

Our first theorem characterizes the removable sets for $\BMO$ $\cL$-solutions.

\begin{definition} Let $U$ be an open set in $\G$. A function $f \in L^1_{\loc} (U)$ is in $\BMO(U)$ if any of the two following equivalent conditions is satisfied:
\begin{equation}
\label{bmodef1} \sup_{\substack{x \in \G,\ r>0\\B(x,r)\subset U}}\frac{1}{|B(x,r)|} \int_{B(x,r)}|f(y)-f_{B(x,r)}|\,dy<\infty
\end{equation}
or
\begin{equation}
\label{bmodef2} \sup_{\substack{x \in \G,\ r>0\\B(x,r)\subset U}} \inf_{c \in \mathbb{R}}\frac{1}{|B(x,r)|} \int_{B(x,r)}|f(y)-c|\,dy<\infty.
\end{equation}
As usual $f_{B(x,r)}=\frac{1}{|B(x,r)|} \int_{B(x,r)} f(y) \, dy$ and $\BMO:=\BMO(\G)$.
\end{definition}


\begin{theorem}\label{bmorem}
Fix $1\le\lambda<Q$ and let $\cL$ be a $\la$-homogeneous, left invariant operator on $\G$ such that both $\L$ and $\L^t$ are hypoelliptic. Then a compact set $E$ is removable for $\BMO$ $\cL$-solutions if and only if $\cH^{Q-\la}(E)=0$.
\end{theorem}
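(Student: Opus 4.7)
\emph{Sufficiency direction.} Assume $\cH^{Q-\lambda}(E) = 0$ and let $U$ be an open set containing $E$. Take $f \in \BMO(U)$ with $\cL f = 0$ on $U \setminus E$; the goal is to show $\langle f, \cL^t \phi \rangle = 0$ for every $\phi \in C_c^\infty(U)$. Paralleling the Euclidean Harvey--Polking argument, fix $\epsilon > 0$. By Proposition \ref{T-property-2}, $\cH^{Q-\lambda}_\cD(E) = 0$, so we can cover $E$ by finitely many essentially disjoint tiles $T_{w_1}, \ldots, T_{w_N} \in \cD_*$ inside a compact neighborhood of $E$ in $U$, with $\sum_i s_i^{Q-\lambda} < \epsilon$ for $s_i := \diam T_{w_i}$. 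Lemma \ref{T-property-7} yields $\varphi_i \in C_c^\infty(\G)$ supported in $B_i := B(p_{w_i}, 2\Rout_{w_i})$, with $\|X_\alpha \varphi_i\|_\infty \lesssim s_i^{-|\alpha|}$ and $\Phi := \sum_i \varphi_i \equiv 1$ on $E$. Then
$$
\langle f, \cL^t \phi \rangle = \langle f, \cL^t((1-\Phi)\phi) \rangle + \sum_{i=1}^N \langle f, \cL^t(\varphi_i \phi)\rangle,
$$
where the first term vanishes because $(1-\Phi)\phi \in C_c^\infty(U \setminus E)$. Since $\lambda \ge 1$, the homogeneous operator $\cL^t$ annihilates constants, so each pairing $\langle f, \cL^t(\varphi_i \phi)\rangle$ equals $\langle f - c_i, \cL^t(\varphi_i \phi)\rangle$ with $c_i := f_{B_i}$. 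Expanding by Leibniz and invoking Lemma \ref{T-property-7} bounds $\|\cL^t(\varphi_i \phi)\|_\infty \lesssim C(\phi)\, s_i^{-\lambda}$, while $\BMO$ provides $\int_{B_i} |f - c_i|\,dx \lesssim \|f\|_\BMO\, s_i^Q$. Summing, $|\langle f, \cL^t \phi \rangle| \lesssim C(\phi)\|f\|_\BMO \sum_i s_i^{Q-\lambda} < C(\phi)\|f\|_\BMO \epsilon$, and letting $\epsilon \to 0$ concludes.

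\emph{Necessity direction.} Assume $\cH^{Q-\lambda}(E) > 0$. Frostman's lemma in the doubling metric space $(\G,d)$ produces a nonzero positive Radon measure $\mu$ with compact support in $E$ satisfying $\mu(B(x,r)) \le r^{Q-\lambda}$ for all $x,r$. Under the standing hypoellipticity of $\cL$ and $\cL^t$, Folland's theorem supplies a homogeneous fundamental solution $K$ for $\cL$ which is a kernel of type $\lambda$, in particular $|X_\alpha K(y)| \lesssim d(y,0)^{\lambda - Q - |\alpha|}$ off the origin. Set $f := \mu * K$. Then $\cL f = \mu \ne 0$ is supported in $E$, so $f$ is a nontrivial $\cL$-solution on $\G \setminus E$. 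To verify $f \in \BMO_\loc$, fix a ball $B = B(x_0, r)$ and split $\mu = \mu_1 + \mu_2$ with $\mu_1 := \mu|_{2B}$. A Fubini computation using $\int_{B(0,3r)} |z|^{\lambda - Q}\,dz \lesssim r^\lambda$ together with the Frostman bound $\mu(2B) \lesssim r^{Q-\lambda}$ gives $\int_B |\mu_1 * K|\,dx \lesssim r^Q = |B|$. A layer-cake estimate using $|X_j K(y^{-1}x)| \lesssim d(y,x)^{\lambda - Q - 1}$ for $d(y,x) \ge r$ yields $\|X_j(\mu_2 * K)\|_{L^\infty(B)} \lesssim r^{-1}$, so the horizontal mean value inequality bounds the oscillation of $\mu_2 * K$ on $B$ by a constant. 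Choosing $c := (\mu_2 * K)(x_0)$ in \eqref{bmodef2} then controls $\tfrac{1}{|B|}\int_B |f - c|\,dx$ uniformly in $B$.

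\emph{Principal obstacle.} The more delicate half is the $\BMO$ verification in the necessity direction: the near-field $\mu_1 * K$ can be pointwise infinite on $B$, so no pointwise bound is available and one must rely on the integrated estimate, which works only because the kernel homogeneity $\lambda - Q$ and the Frostman exponent $Q - \lambda$ balance exactly to the critical threshold $|B| = r^Q$. On the sufficiency side the sharp $s_i^{-\lambda}$ derivative bound for $\cL^t(\varphi_i \phi)$ is the essential technical point, and it is precisely what Lemma \ref{T-property-7} delivers; this is why constructing a sub-Riemannian smooth partition of unity with the correct dilation-tracking derivative estimates (via the self-similar tilings of Section \ref{sec:hptil}) was the crucial preparatory step.
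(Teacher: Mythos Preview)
Your proof is correct and follows essentially the same strategy as the paper's: the sufficiency direction is the Harvey--Polking argument verbatim (cover by tiles, subtract the $\BMO$ average $c_i=f_{B_i}$, use the $s_i^{-\lambda}$ derivative bound from Lemma~\ref{T-property-7}), and the necessity direction is Frostman measure convolved with Folland's fundamental solution, with $\BMO$ verified via the near/far split $\mu=\mu_1+\mu_2$. Two small differences are worth noting. First, for the far piece you bound the horizontal gradient $|X_j(\mu_2\ast K)|\lesssim r^{-1}$ and invoke the sub-Riemannian mean-value inequality, whereas the paper appeals directly to the Folland--Stein estimate $|k(Y\cdot X)-k(X)|\le C\|Y\|\,\|X\|^{\lambda-Q-1}$; these are equivalent. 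Second, and more interestingly, you conclude non-removability directly from $\cL f=\mu\neq 0$, while the paper argues by contradiction via Geller's Liouville theorem (if $\cL f=0$ globally then $f$ is a polynomial vanishing at infinity, hence $f\equiv 0$); your route is shorter and avoids the extra citation, though it does require being careful that the convolution is written in the order for which left-invariance gives $\cL(\mu\ast K)=\mu\ast\cL K=\mu$. One cosmetic point: your estimate is uniform over all balls, so $f\in\BMO(\G)$, not merely $\BMO_{\loc}$---and indeed full $\BMO$ is what the removability definition requires.
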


Thus, for instance, when $\cL$ is the sub-Laplacian on $\G$, a compact set $E$ is removable for $\BMO$ $\cL$-harmonic functions if and only if $\cH^{Q-2}(E)=0$.

In the Euclidean setting $\R^n$, $n\ge 3$, Theorem \ref{bmorem} is due to Ishchanov \cite{ish:bmo}. Recall that we have assumed $Q\ge 3$ in this paper. The planar case is somewhat different; for results in that setting, see Kaufman \cite{kau:bmo}.

\begin{proof}[Proof of Theorem \ref{bmorem}]
We first suppose that $\cH^{Q-\la}(E)=0$. Let a domain $\Omega \supset E$ and a  function $f \in \BMO(\Omega)$ be given. Let $d_0=\dist(E,\Omega^c)$. Let also $\psi \in C^\infty_0(\Omega)$ and $0<\varepsilon<(d_0/4)^{Q-\lambda}$. By Proposition \ref{T-property-2} and compactness there exists a finite collection $\{T_{w_j}\}_{j=1}^N$ of tiles such that $E \subset \bigcup_{j=1}^N T_{w_j}$ and
\begin{equation*}\label{ecover}
\sum_{j=1}^N (\diam T_{w_j})^{Q-\la}<\varepsilon.
\end{equation*}
Note that $\bigcup_{j=1}^N B(p_{w_j},2\Rout_{w_j}) \subset \Omega$, since $\Rout_{w_j}\le \diam T_{w_j}$.

By Lemma \ref{T-property-7} there exists a family of functions $\{\varphi_j\}_{j=1}^N$ such that $\supp \vfi_j \subset B(p_{w_j}, \Rout_{w_j})$, $\sum_{j=1}^N \vfi_j \equiv 1$ on $\bigcup_{j=1}^N T_{w_j}$ and $\|X_{\alpha} \vfi_j\|_\infty \le C (\diam T_{w_j})^{-|\alpha|}$ for every multi-index $\alpha$ and every $j=1,\ldots,N$. Furthermore, as noted earlier, by the Poincar\'e--Birkhoff--Witt Theorem, $\L$ is a linear combination of the operators $X_{\alpha_l}$ with $|\alpha_l|=\lambda$; therefore we can assume without loss of generality that $\L=X_\alpha$ with $|\alpha|=\la$. Since $\L f=0$ in $\Omega \setminus E$, we conclude
\begin{equation}\begin{split}\label{distest}
\ang{\L f, \psi}&=\ang{\L f, \psi \cdot 1}=\ang{\L f, \sum_{j=1}^N \psi \vfi_j} \\
&= \sum_{j=1}^N \ang{\L f, \psi \vfi_j} =\sum_{j=1}^n \ang{\L (f-c_j), \psi \vfi_j}\\
&=\sum_{j=1}^N (-1)^{|\alpha|} \ang{f-c_j, X_\alpha (\psi \vfi_j)}
=\sum_{j=1}^N (-1)^{|\alpha|} \ang{f-c_j, \sum_{\beta \leq \alpha} \binom{\alpha}{ \beta} X_\beta\psi \, X_{\alpha-\beta}\vfi_j},
\end{split}
\end{equation}
where $c_j=f_{B(p_{w_j}, 2 \Rout_{w_j})}$, $\beta$ is a multi-index with $|\beta|=|\alpha|$ and $\beta \leq \alpha$ denotes that $\beta_i \leq \alpha_i$ for all $i=1,\dots,|\alpha|$. Therefore
\begin{equation*}
\begin{split}
|\ang{\L f, \psi}| &\leq \sum_{j=1}^N | \ang{f-c_j, \psi \, X_\alpha\vfi_j}| + \sum_{j=1}^n \left| \left\langle f-c_j,\sum_{\substack{\beta \leq \alpha \\ \beta \neq \alpha}} \binom{\alpha}{ \beta} X_\beta\psi \, X_{\alpha-\beta}\vfi_j \right\rangle \right| \\
&\lesssim \sum_{j=1}^N \int_{B(p_{w_j},2 \Rout_{w_j})}|f(y)-c_j| \, \|X_\alpha (\vfi_j)\|_\infty \, dy \\
& \qquad \qquad + \sum_{j=1}^N \sum_{\substack{\beta \leq \alpha \\ \beta \neq \alpha}} \int_{B(p_{w_j},2 \Rout_{w_j})}|f(y)-c_j| \, \|X_{\alpha-\beta}\vfi_j\|_\infty \, dy \\
& \lesssim \sum_{j=1}^N (\diam T_{w_j})^{-\lambda} \, |B(p_{w_j}, 2 \Rout_{w_j})| + \sum_{j=1}^N \sum_{\substack{\beta \leq \alpha \\ \beta \neq \alpha}} (\diam T_{w_j})^{|\beta|-\lambda} \, |B(p_{w_j}, 2 \Rout_{w_j})| \\
& \lesssim \sum_{j=1}^N \left( (\diam T_{w_j})^{Q-\lambda} + \sum_{\substack{\beta \leq \alpha \\
\beta \neq \alpha}} (\diam T_{w_j})^{Q-\lambda+|\beta|} \right) 
\lesssim \sum_{j=1}^n (\diam T_{w_j})^{Q-\lambda} < \varepsilon.
\end{split}
\end{equation*}
Since $\varepsilon$ was arbitrary we obtain that $\ang{\cL f, \psi}=0$, which means that $f$ is a distributional solution to $\cL f=0$ in $\Omega$. Therefore by hypoellipticity, $f \in C^\infty(\Omega)$ and $\cL f=0$ in $\Omega$. Hence $E$ is removable for $\BMO$ $\cL$-solutions.

For the other direction it is enough to show that if $\mathcal{H}^{Q-\lambda} (E)>0$, then $E$ is not removable. By Frostman's Lemma there exists a non-trivial Borel measure $\mu$ with $\supp \mu \subset E$, satisfying
\begin{equation}\label{frostman}
\mu (B(x,r)) \le C r^{Q-\lambda} \quad \mbox{for all $x\in \G$ and $r>0$,}
\end{equation}
By \cite[Theorem 2.1]{fol:lie} there is a unique kernel $k$ of type $\lambda$ which is a fundamental solution for~$\L$. Denote by $\|q\|=d(p,0)$ the homogeneous norm associated to the Carnot--Carath\'eodory metric $d$ on $\G$. Since $k$ is continuous on $\Sigma := \{q \in \G:\|q\|=1\}$, there exists $q_1 \in \Sigma$ such that $|k(q)| \le |k(q_1)| $ for all $q \in \Sigma$. Consequently, if $p \in \G \setminus \{0\}$ then
\begin{equation}\label{kernelbound}
|k(p)|=|k(\delta_{\|p\|} \circ \delta_{\|p\|^{-1}}(p))|\leq \|p\|^{\lambda-Q}|k(\delta_{\|p\|^{-1}}(p))|\leq c_1  \|p\|^{\lambda-Q},
\end{equation}
where we set $c_1 := k(q_1)$.

Now let
$$
f=k\ast \mu:=\int k(x \cdot y^{-1}) d \mu(y).
$$
Since $k$ is a fundamental solution of the hypoelliptic operator $\L$ we deduce that $\L f=0$ in~$\G \stm E$.

Furthermore $f \in \BMO$. To see this, first notice that $f \in L^1_ \loc$ with respect to the Haar measure. Let $x_0 \in \G, r>0$ and let $\mu_1 := \mu\restrict B(x_0,2r)$ and $\mu_2 := \mu\restrict B(x_0,2r)^c$ denote the restrictions of $\mu$ to $B(x_0,2r)$ and $B(x_0,2r)^c$ respectively. By (\ref{kernelbound}) and Fubini's theorem,
\begin{equation*}
\begin{split}
\Big| \int_{B(x_0,r)} \int & k(x \cdot y^{-1}) \, d \mu_1 (y) \, dx\Big| \leq \int_{B(x_0,r)} \int_{B(x_0,2r)} \|x \cdot y^{-1}\|^{\la -Q} \, d \mu (y) \, dx \\
&= \int_{B(x_0,2r)}\int_{B(x_0,r)}  \|x \cdot y^{-1}\|^{\la -Q} \, dx \, d \mu (y)\\
&\leq \int_{B(x_0,2r)} \Big( \sum_{j=0}^\infty \int_{B(x,2^{-(j-1)}r)\stm B(x,2^{-j}r)}  \|x \cdot y^{-1}\|^{\la -Q} \, dx \Big) \, d \mu (y)\\
&\lesssim \int_{B(x_0,2r)} r^{\lambda} d \mu (y) \lesssim r^{Q}.
\end{split}
\end{equation*}
Hence
\begin{equation}\label{bmo1}
\int_{B(x_0,r)}|k \ast \mu_1 (x)| \, dx \lesssim r^{Q}.
\end{equation}
For $x\in B(x_0,r)$,
$$
|k\ast\mu_2(x)-k\ast\mu_2(x_0)| \le \int_{B(x_0,2r)^c}|k(x \cdot y^{-1})-k(x_0 \cdot y^{-1})| \, d \mu (y).
$$
Since $k$ agrees with a $C^\infty$, $(\lambda-Q)$-homogeneous function on $\G \setminus \{0\}$,
\begin{equation}\label{folprop1.7}
|k(Y\cdot X)-k(X)|\leq C \|Y\|\|X\|^{\lambda-Q-1} \quad \mbox{for all $\|Y\|\leq \|X\|/2$.}
\end{equation}
This follows exactly as in \cite[Proposition 1.7]{fs:hardy} using the smoothness of the map $y \ra y\cdot x$. Therefore if $x\in B(x_0,r), y \in B(x_0,2r)^c,$ letting $X=x_0\cdot y^{-1},Y=x\cdot x_0^{-1}$ we have that $\|Y\|\leq r \leq \|X\|/2$ and we can apply (\ref{folprop1.7}) to obtain
\begin{equation*}
\begin{split}
|k(x \cdot y^{-1})&-k(x_0 \cdot y^{-1})|=|k(Y\cdot X)-k(X)| \\
&\leq C \|Y\|\|X\|^{\lambda-Q-1}= r \|x_0 \cdot y^{-1}\|^{\lambda-Q-1}.
\end{split}
\end{equation*}
Therefore for $x \in B(x_0,r)$,
\begin{equation*}
\begin{split}
|k\ast\mu_2(x)&-k\ast\mu_2(x_0)| \lesssim r \, \int_{B(x_0,2r)^c}\|x_0 \cdot y^{-1}\|^{\la-Q-1} \, d \mu (y) \\
& = r \sum_{k=1}^\infty \int_{B(x_0,2^{k+1}r)\stm B(x_0,2^{k}r)}\|x_0 \cdot y^{-1}\|^{\la-Q-1} \, d \mu (y) \\
&\leq r \sum_{k=1}^\infty \int_{B(x_0,2^{k+1}r)} (2^k r)^{\la-Q-1} \, d \mu (y) \\
& = r \sum_{k=1}^\infty (2^k r)^{\la-Q-1} \mu (B(x_0,2^{k+1}r))\leq C,
\end{split}
\end{equation*}
and
\begin{equation}\label{bmo2}
\int_{B(x_0,r)}|k\ast\mu_2(x)-k\ast\mu_2(x_0)| \, dx \le C r^{Q}.
\end{equation}
Combining (\ref{bmo1}) and (\ref{bmo2}) we get that for all $x_0 \in \G$ and $r>0$ that
\begin{equation*}
\begin{split}
\int_{B(x_0,r)}|k\ast\mu(x)&-k\ast\mu_2(x)| \, dx = \int_{B(x_0,r)}|k\ast\mu_1(x)+k\ast\mu_2(x)-k\ast\mu_2(x)| \, dx \\
& \le \int_{B(x_0,r)}|k\ast\mu_1(x)| dx+\int_{B(x_0,r)}|k\ast\mu_2(x)-k\ast\mu_2(x)| \, dx \leq C r^{Q}.\\
\end{split}
\end{equation*}
Recalling the equivalent definition (\ref{bmodef2}) for $\BMO$ functions we deduce that $f=k*\mu \in \BMO$.

Now suppose that $\L f=0$. Then $f \in C^{\infty} (\G)$ by hypoellipticity and moreover, by  Geller's Liouville type theorem \cite{gel:liou}, $f$ is a polynomial. By (\ref{kernelbound}),
$$
|f(p)| \le \int_E \|p \cdot q^{-1}\|^{\la-Q} \, d \mu (q)
$$
for all $p \in \G$. Since $E$ is compact this implies that $\lim_{\|p\| \ra \infty} |f(p)|=0$, hence $f \equiv 0$. Since $f$ cannot be constant, we have reached a contradiction. It follows that $E$ is not removable for $\BMO$ $\cL$-solutions.
\end{proof}

Next, we consider the case of H\"older continuous functions. Again, we characterize the removable sets for $\cL$-solutions in this class.
 
\begin{definition}
Let $U \subset \G$ open and $\de \in (0,1)$. A function $f:U \ra  \R$ belongs to $\Lip_\delta (U)$ if there exists some constant $C$ such that
$$
|f(x)-f(y)|\leq C d(x,y)^\de
$$
for all $x,y \in U$.
\end{definition}


\begin{theorem}\label{lipderem}
Fix $1\le\lambda<Q$ and let $\cL$ be a $\la$-homogeneous, left invariant operator on $\G$ such that both $\L$ and $\L^t$ are hypoelliptic. Then a compact set $E$ is removable for $\Lip_\delta$  $\cL$-solutions, $\de \in (0,1)$, if and only if $\cH^{Q-\la+\de}(E)=0$.
\end{theorem}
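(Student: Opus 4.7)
The plan is to follow the template of Theorem \ref{bmorem}, adapting each half to the H\"older setting. For the sufficient direction, suppose $\cH^{Q-\la+\de}(E)=0$ and let $f \in \Lip_\de(\Omega)$ with $\cL f=0$ on $\Omega\setminus E$. As in the $\BMO$ case, use Proposition \ref{T-property-2} to cover $E$ by finitely many essentially disjoint tiles $\{T_{w_j}\}_{j=1}^N$ with $\sum_j (\diam T_{w_j})^{Q-\la+\de}<\ve$ and all $B(p_{w_j},2\Rout_{w_j})\subset \Omega$; then apply Lemma \ref{T-property-7} to get the partition $\{\vfi_j\}$. By Poincar\'e--Birkhoff--Witt we may reduce to $\cL=X_\al$ with $|\al|=\la$. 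The crucial replacement is to pick the constants $c_j=f(p_{w_j})$ (rather than ball averages) and use H\"older continuity
$$
|f(y)-c_j| \le C (\diam T_{w_j})^{\de}
\quad \mbox{for $y \in B(p_{w_j},2\Rout_{w_j})$.}
$$
Carrying out the distributional computation in \eqref{distest} verbatim, each summand is bounded by a constant times $(\diam T_{w_j})^{\de}\cdot (\diam T_{w_j})^{-\la}\cdot |B(p_{w_j},2\Rout_{w_j})| \lesssim (\diam T_{w_j})^{Q-\la+\de}$, so $|\ang{\cL f,\psi}|\lesssim \ve$ and removability follows via hypoellipticity.

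For the necessary direction, assume $\cH^{Q-\la+\de}(E)>0$. Frostman's lemma produces a nontrivial measure $\mu$ with $\supp\mu\subset E$ and
$$
\mu(B(x,r)) \le C\, r^{Q-\la+\de}\quad\mbox{for all $x\in\G,\ r>0$.}
$$
Let $k$ be Folland's fundamental solution of type $\la$ and set $f=k\ast\mu$, so that $\cL f=0$ on $\G\setminus E$ by hypoellipticity of~$\cL$. The key technical step is to show $f\in\Lip_\de(\G)$. Given $x_1,x_2\in\G$ with $r:=d(x_1,x_2)$, write $\mu=\mu_1+\mu_2$ where $\mu_1=\mu\restrict B(x_1,2r)$. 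For the near part, the pointwise bound \eqref{kernelbound} together with a standard dyadic decomposition of $B(x_1,2r)$ into shells and the Frostman estimate yields
$$
|k\ast\mu_1(x_i)| \lesssim \int_0^{2r} t^{\la-Q}\, d\mu(B(x_i,t))/dt \lesssim r^{\la - Q+Q-\la+\de}=r^{\de},\quad i=1,2.
$$
For the far part, the gradient-type estimate \eqref{folprop1.7} (applied exactly as in the $\BMO$ proof with $X=x_1\cdot y^{-1}$, $Y=x_2\cdot x_1^{-1}$) gives
$$
|k\ast\mu_2(x_1)-k\ast\mu_2(x_2)| \lesssim r \int_{B(x_1,2r)^c} \|x_1\cdot y^{-1}\|^{\la-Q-1}\, d\mu(y) \lesssim r\cdot r^{\la-Q-1+Q-\la+\de}=r^{\de},
$$
again by dyadic shell decomposition and the Frostman bound. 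Adding these two estimates gives $|f(x_1)-f(x_2)|\lesssim r^{\de}$.

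Finally, to derive the contradiction, suppose $\cL f=0$ globally on $\G$. Hypoellipticity makes $f$ smooth, and Geller's Liouville theorem forces $f$ to be a polynomial. Yet by \eqref{kernelbound} and compactness of $E$, for $\|p\|$ much larger than $\diam E$ we have $\|p\cdot q^{-1}\|\simeq\|p\|$ uniformly for $q\in E$, hence $|f(p)|\lesssim \|p\|^{\la-Q}\mu(E)\to 0$ as $\|p\|\to\infty$ (since $\la<Q$). The only polynomial vanishing at infinity is $0$, contradicting nontriviality of $\mu$. I expect the main obstacle to be organizing the H\"older estimate for $f=k\ast\mu$ cleanly in the sub-Riemannian setting, as both shell estimates must rely uniformly on the Frostman growth exponent $Q-\la+\de$ and on the homogeneity of $k$; once this bookkeeping is in place, the rest of the argument is parallel to the $\BMO$ case.
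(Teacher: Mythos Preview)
Your proposal is correct and follows essentially the same approach as the paper: the sufficient direction is exactly the paper's argument (choose $c_j=f(p_{w_j})$ and run \eqref{distest} with the extra factor $(\diam T_{w_j})^{\de}$), and for the necessary direction both arguments build $f=k\ast\mu$ from a Frostman measure, split into near/far parts, use \eqref{kernelbound} plus dyadic shells on the near part and \eqref{folprop1.7} plus dyadic shells on the far part, and finish with Geller's Liouville theorem. The only cosmetic difference is that the paper splits the domain of integration of $|k(x\cdot y^{-1})-k(z\cdot y^{-1})|$ directly rather than splitting $\mu$, but this is the same computation.
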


In particular, if $\cL$ is the sub-Laplacian on $\G$, then a compact set $E$ is removable for $\Lip_\delta$ $\cL$-harmonic functions if and only if $\cH^{Q-2+\delta}(E)=0$.

\begin{proof}[Proof of Theorem \ref{lipderem}]
We first suppose that $\cH^{Q-\la+\de}(E)=0$. The proof in this case is very similar to the proof of the same implication in Theorem \ref{bmorem}. The only difference is that now in (\ref{distest}) we have to choose $c_j=f(p_{w_j})$. 

For the other direction it is enough to show that if $\mathcal{H}^{Q-\lambda+\de} (E)>0$, then $E$ is not removable. Again using Frostman's Lemma we find a non-trivial Borel measure $\mu$ with $\supp \mu \subset E$, satisfying
$$
\mu (B(x,r)) \le C r^{Q-\lambda+\de} \quad \mbox{for all $x\in \G$ and $r>0$,}
$$
If $f=k\ast \mu$ where $k$ is a fundamental solution of the hypoelliptic operator $\L$, it follows that $\L f=0$ in $\G \stm E$.

We are going to show that $f \in \Lip_\de(\G)$. Let $x,z \in \G$. Then 
\begin{equation*}
\begin{split}
|f(x)-f(z)| &= \left| \int k(x\cdot y^{-1}) \, d\mu(y)-\int k(z\cdot y^{-1}) \, d \mu(y)\right| \\ 
&\le \int_{\{\|z\cdot y^{-1}\|>2\|z\cdot x^{-1}\|\}} | k(x\cdot y^{-1})-k(z\cdot y^{-1}) | \, d \mu(y) \\
&\quad\quad\quad + \int_{\{\|z\cdot y^{-1}\|\leq 2\|z\cdot x^{-1}\|\}} | k(x\cdot y^{-1})- k(z\cdot y^{-1})| \, d \mu(y) \\
&:=I_1+I_2.
\end{split}
\end{equation*}
To estimate $I_1$ we use (\ref{folprop1.7}) to obtain
\begin{equation*}
\begin{split}
I_1 \lesssim \|z\cdot x^{-1}\| \int_{\{\|z\cdot y^{-1}\|>2\|z\cdot x^{-1}\|\}} \|z\cdot y^{-1}\|^{\lambda -Q-1} \, d \mu(y).
\end{split}
\end{equation*}
Moreover,
\begin{equation*}
\begin{split}
&\int_{\{\|z\cdot y^{-1}\|>2\|z\cdot x^{-1}\|\}} \|z\cdot y^{-1}\|^{\lambda -Q-1} \, d \mu(y) \\
& \qquad \qquad = \sum_{j=1}^\infty \int_{B(z, 2^{j+1}\|z\cdot x^{-1}\|)\stm B(z, 2^{j}\|z\cdot x^{-1}\|)} \|z\cdot y^{-1}\|^{\lambda-Q-1} \, d \mu(y) \\
& \qquad \qquad \lesssim \sum_{j=1}^\infty \frac{(2^{j+1}\|z\cdot x^{-1}\|)^{Q-\la+\de}}{(2^{j}\|z\cdot x^{-1}\|)^{Q-\la+1}} \lesssim \|z\cdot x^{-1}\|^{\de-1} \sum_{j=1}^\infty (2^{\de-1})^j \lesssim \|z\cdot x^{-1}\|^{\de-1}.
\end{split}
\end{equation*}
Hence $I_1 \lesssim \|z\cdot x^{-1}\|^{\de}$.

For $I_2$, recalling the pseudo triangle inequality $\|p\cdot q\| \leq C(\|p\|+\|g\|)$ for $p,q \in \G$, we see that
$$\{y \in \G:\|z\cdot y^{-1}\|\leq2\|z\cdot x^{-1}\|\} \subset \{y\in \G:\|x\cdot y^{-1}\|\leq 3C\|z\cdot x^{-1}\|\}.$$
Hence
\begin{equation*}
I_2 \leq \int_{\{\|x\cdot y^{-1}\|\leq 3 C\|z\cdot x^{-1}\|\}} |k(x\cdot y^{-1})| \, d \mu (y) +\int_{\{\|z\cdot y^{-1}\|\leq 2\|z\cdot x^{-1}\|\}}| k(z\cdot y^{-1})| \, d \mu (y).
\end{equation*}
Also
\begin{equation*}
\begin{split}
\int_{\{\|x\cdot y^{-1}\|\leq 3 C\|z\cdot x^{-1}\|\}}& |k(x\cdot y^{-1})| \, d \mu(y) \\
&= \sum_{j=0}^\infty \int_{B(x, 2^{-j}3 C\|z\cdot x^{-1}\|)\stm B(x, 2^{-j-1}3 C\|z\cdot x^{-1}\|)} \|x\cdot y^{-1}\|^{\lambda -Q-1} \, d \mu (y) \\
&\lesssim \sum_{j=0}^\infty \frac{(2^{-j}3 C\|z\cdot x^{-1}\|)^{Q-\la+\de}}{(2^{-j-1}3 C\|z\cdot x^{-1}\|)^{Q-\la}} \\
&\lesssim \|z\cdot x^{-1}\|^{\de} \sum_{j=0}^\infty (2^{\de})^{-j} \lesssim \|z\cdot x^{-1}\|^{\de}
\end{split}
\end{equation*}
and in the same way
$$
\int_{\{\|z\cdot y^{-1}\|\leq 2\|z\cdot x^{-1}\|\}}| k(z\cdot y^{-1})| \, d \mu (y) \lesssim \|z\cdot x^{-1}\|^{\de}.
$$
Therefore $I_1+I_2 \lesssim \|z\cdot x^{-1}\|^{\de}$ and $f \in \Lip_\de (\G)$.

Finally arguing as in Theorem \ref{bmorem} and using Geller's Liouville theorem we deduce that $\L f \not\equiv 0$ in $\G$ which implies that $E$ is not removable for $\Lip_\de$ $\cL$-solutions.
\end{proof}

\section{Removable sets for $L^p_\loc$ $\cL$-solutions}\label{sec:main2}

The following lemma is a direct consequence of Lemma \ref{T-property-7}, nevertheless we provide the proof which follows as in \cite{hp:pde}.

\begin{lemma}
\label{harveypolking2}
Let $E \subset \G$ compact, $p \geq 1$ and $\ve>0$. Then there exists $\vfi_\ve \in C_0^\infty(\G)$ such that $\vfi_\ve \equiv 1$ in a neighborhood of $E$, $\supp \vfi_\ve \subset E_\ve:=\{x:\dist(x,E)<\ve\}$ and for all multi-indices $\alpha$ such that $|\alpha| \leq \ell$,
$$\|X_\alpha \vfi_\ve\|_p\lesssim \ve^{\ell-|\alpha|}(\cH^{Q-\ell p}(E)+\ve)^{1/p}.$$
\end{lemma}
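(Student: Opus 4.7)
The plan is to mimic the Euclidean Harvey--Polking construction, using the sub-Riemannian tiles from Section~\ref{sec:hptil} in place of dyadic cubes. First, by Proposition~\ref{T-property-2} and the compactness of $E$, I would choose a finite, essentially disjoint family of tiles $\{T_{w_j}\}_{j=1}^N$ covering $E$ with $s_j := \diam T_{w_j} \leq c\ve$ for all $j$ (for a small absolute constant $c$ to be fixed), satisfying
\begin{equation*}
\sum_{j=1}^N s_j^{Q-\ell p} \leq C\bigl(\cH^{Q-\ell p}(E) + \ve\bigr).
\end{equation*}
After discarding any tile not meeting $E$, the comparability $\Rout_{w_j} \simeq s_j$ lets me fix $c$ so that $\bigcup_j B(p_{w_j}, 2\Rout_{w_j}) \subset E_\ve$.

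Next I apply Lemma~\ref{T-property-7} to this collection, choosing the fixed base cutoff $\psi \in C_0^\infty(\G)$ to equal $1$ on an open neighborhood of $T$ (not merely on $T$ itself); then $\psi_i = \psi \circ f_{w_i}^{-1}$ equals $1$ on an open neighborhood of $T_{w_i}$. Setting $\vfi_\ve := \sum_{i=1}^N \vfi_i = 1 - \prod_{i=1}^N(1-\psi_i)$ produces $\vfi_\ve \in C_0^\infty(\G)$ with $\vfi_\ve \equiv 1$ on an open neighborhood of $\bigcup_j T_{w_j} \supset E$, $\supp \vfi_\ve \subset E_\ve$, and $\|X_\alpha \vfi_i\|_\infty \leq C_\alpha s_i^{-|\alpha|}$ for each multi-index $\alpha$. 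The central estimate I must establish is
\begin{equation*}
\|X_\alpha \vfi_\ve\|_p^p \lesssim \sum_{j=1}^N s_j^{Q - p|\alpha|}.
\end{equation*}
Once this is in hand, writing $s_j^{Q - p|\alpha|} = s_j^{p(\ell - |\alpha|)} \cdot s_j^{Q - \ell p}$ and using $s_j \leq c\ve$ together with $|\alpha| \leq \ell$ yields
\begin{equation*}
\|X_\alpha \vfi_\ve\|_p^p \lesssim \ve^{p(\ell - |\alpha|)} \sum_{j=1}^N s_j^{Q - \ell p} \lesssim \ve^{p(\ell - |\alpha|)} \bigl( \cH^{Q - \ell p}(E) + \ve \bigr),
\end{equation*}
which is the claimed inequality after taking $p$th roots.

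The main obstacle is proving the displayed $L^p$ estimate when the tiles $\{T_{w_j}\}$ have widely varying diameters, since the enlarged balls $B(p_{w_j}, 2\Rout_{w_j})$ need not have uniformly bounded overlap across distinct scales. I plan to overcome this by upgrading the computation in the proof of Lemma~\ref{T-property-7} to a localized pointwise bound of the form $|X_\alpha \vfi_\ve(q)| \lesssim s(q)^{-|\alpha|}$, where $s(q) := \min\{s_j : q \in B(p_{w_j}, 2\Rout_{w_j})\}$ (and $X_\alpha \vfi_\ve(q) = 0$ if no such $j$ exists). The point is that $X_\beta \psi_j(q) = 0$ whenever $q \notin \supp \psi_j$, so the product-rule expansion of $X_\alpha \prod_i(1-\psi_i)$ at $q$ only sees indices contributing to $s(q)$; the scale-by-scale counting from Corollary~\ref{T-property-4}, inserted into the geometric-series bound used in Lemma~\ref{T-property-7}, then delivers the desired pointwise estimate. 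Finally, partitioning $\G$ into the regions $R_j := \{q : s(q) = s_j\} \subset B(p_{w_j}, 2\Rout_{w_j})$, each of Haar measure $\lesssim s_j^Q$, and integrating $|X_\alpha \vfi_\ve|^p$ gives $\int_\G |X_\alpha \vfi_\ve|^p \lesssim \sum_j s_j^{Q - p|\alpha|}$, completing the argument.
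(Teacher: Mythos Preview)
Your proposal is correct and follows essentially the same route as the paper. The paper orders the tiles by decreasing diameter, sets $D_k = B(p_{w_k},\Rout_{w_k}) \setminus \bigcup_{j>k} B(p_{w_j},\Rout_{w_j})$, observes that $\vfi_\ve = \theta_k$ on $D_k$, and then invokes the already-proved bound \eqref{hptheta} to get $|X_\alpha \vfi_\ve| \lesssim s_k^{-|\alpha|}$ there; your sets $R_j = \{q : s(q)=s_j\}$ are exactly these $D_k$ (up to ties), and your localized pointwise bound $|X_\alpha \vfi_\ve(q)| \lesssim s(q)^{-|\alpha|}$ is precisely what that observation yields, so you can shortcut the Leibniz re-expansion and cite \eqref{hptheta} directly.
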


\begin{proof} 
We may assume without loss of generality that $\cH^{Q-\ell p}(E)<\infty$. For any $\ve>0$ there exist disjoint tiles $\{T_k\}_{k=1}^N$ which cover $E$ such that $\diam T_k \le \ve$, $\bigcup_{k=1}^N B(p_{w_k},\Rout_{w_k}) \subset E_\ve$ and
\begin{equation}\label{hp2diam}
\sum_{k=1}^N (\diam T_k)^{Q-\ell p} \le C \left( \cH^{Q-\ell p}(E)+\ve \right) \, .
\end{equation}
We can assume that $\diam T_1 \ge \diam T_2 \ge \cdots \ge \diam T_N$. Let $\{\vfi_k\}_{k=1}^N$ be the partition of unity, in the sense of Lemma \ref{T-property-7}, associated to the tiles $\{T_k\}_{k=1}^N$. Then if $\vfi_\ve= \sum_{k=1}^N \vfi_k$ we have that $\supp \vfi_\ve \subset 
\bigcup_{k=1}^N B(p_{w_k},\Rout_{w_k})$ and $\vfi_\ve \equiv 1$ on $\bigcup_{k=1}^N T_k$. 

Let
$$
D_k=B(p_{w_k},\Rout_{w_k}) \setminus \bigcup_{j=k+1}^N B(p_{w_j},\Rout_{w_k}), \qquad \mbox{for $k=1,\ldots,N$.}
$$
The sets $D_k$ are disjoint and $\bigcup_{k=1}^N D_k = \bigcup_{k=1}^N B(p_{w_k},\Rout_{w_k})$. Notice that if $x \in D_k$ then $x \notin \bigcup_{j=k+1}^N B(p_{w_j},\Rout_{w_k})$ which implies that $\vfi_j(x)=0$ for all $x \in D_k$ and $j>k$. Hence 
$$
\vfi_\ve (x)= \sum_{j=1}^k \vfi_j(x)=\theta_k (x)
$$
for all $x \in D_k$, where $\theta_k$ is as in Lemma \ref{T-property-7}, and consequently
$$
|X_\alpha \vfi_\ve (x)| \leq C_\alpha ( \diam T_k)^{-|\alpha|}.
$$
Therefore by (\ref{hp2diam})
\begin{equation*}
\begin{split}
\|X_\alpha \vfi_\ve\|^p_p&= \sum_{k=1}^N \int_{D_k}|X_\alpha \vfi_{\ve}(x)|^p dx= \sum_{k=1}^N \int_{D_k}|X_\alpha \theta_k(x)|^pdx \\
&\lesssim \sum_{k=1}^N \int_{D_k} ( \diam T_k)^{-|\alpha|p}dx \lesssim \sum_{k=1}^N ( \diam T_k)^Q 
( \diam T_k)^{-|\alpha|p} \\
&\lesssim \ve ^{(\ell-|a|)p} \sum_{k=1}^N ( \diam T_k)^{Q-\ell p}\lesssim \ve ^{(\ell-|a|)p} (\cH^{Q-\ell p}(E)+\ve).
\end{split}
\end{equation*}
The proof is complete.
\end{proof}


In our final theorem we consider removable sets for $L^p_\loc$ $\cL$-solutions. Our characterization in the following theorem is sharp on the level of the Hausdorff dimension, but not on the level of the Hausdorff measure.

We denote by $\dim E$ the Hausdorff dimension of a set $E\subset\G$ with respect to the metric~$d$. 

\begin{theorem}\label{lplocrem}
Fix $1\le\lambda<Q$ and let $\cL$ be a $\la$-homogeneous, left invariant operator on $\G$ such that both $\L$ and $\L^t$ are hypoelliptic. Then if $1< p<\infty$:
\begin{enumerate}
\item any compact set $E$ with $\cH^{Q-\la p'}(E)<\infty$ is removable for $L^p_\loc$ $\cL$-solutions, while
\item any compact set $E$ with $\dim E>Q-\la p'$ is not removable for $L^p_\loc$ $\cL$-solutions.
\end{enumerate}
Here as usual $p'$ denotes the conjugate exponent of $p$. If $p=\infty$:
\begin{enumerate}
\item[(3)] any compact set $E$ with $\cH^{Q-\la}(E)=0$ is removable  for $L^\infty_\loc$ $\cL$-solutions, while
\item[(4)] any compact set $E$ with $\dim E>Q-\la$ is not removable for $L^\infty_\loc$ $\cL$-solutions.
\end{enumerate}
Finally,
\begin{enumerate}
\item[(5)] if $E$ is compact with $\cH^{Q-\la}(E)<\infty$, $\Omega \supset E$ is a domain and $f \in L^\infty_\loc(\Omega)$ such that $\L f=0$ in $\Omega \setminus E$, then $\L f$ is a measure supported in $E$.
\end{enumerate}
\end{theorem}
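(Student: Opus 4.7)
The plan is to show directly that the distribution $\cL f$ extends to a bounded linear functional on $C_c(\Omega)$ with respect to the uniform norm on each compact set, and then to invoke the Riesz--Markov representation theorem to identify it with a signed Radon measure. The hypothesis that $\cL f$ vanishes on $\Omega\setminus E$ will then automatically force the support of this measure to lie in $E$.

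As in the proof of Theorem \ref{bmorem}, the Poincar\'e--Birkhoff--Witt theorem reduces the problem to the case $\cL = X_\alpha$ with $|\alpha| = \lambda$. Fix $\psi \in C_c^\infty(\Omega)$ and let $d_0 = \dist(\supp \psi \cup E, \Omega^c)$. For $0 < \ve < d_0/2$, let $\vfi_\ve$ be the cutoff produced by Lemma \ref{harveypolking2} with $\ell = \lambda$ and $p = 1$. Since $\vfi_\ve \equiv 1$ in a neighborhood of $E$, the function $\psi(1-\vfi_\ve)$ lies in $C_c^\infty(\Omega \setminus E)$, so by hypothesis
$$
\ang{\cL f, \psi} = \ang{\cL f, \psi\vfi_\ve} = (-1)^\lambda \ang{f, X_\alpha(\psi\vfi_\ve)}.
$$

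Expanding via the Leibniz rule $X_\alpha(\psi\vfi_\ve) = \sum_{\beta \le \alpha} C_{\alpha,\beta}\,(X_\beta\psi)(X_{\alpha-\beta}\vfi_\ve)$, I would apply Lemma \ref{harveypolking2} in the form $\|X_{\alpha-\beta}\vfi_\ve\|_1 \lesssim \ve^{|\beta|}(\cH^{Q-\lambda}(E)+\ve)$ to obtain
$$
|\ang{f,(X_\beta\psi)(X_{\alpha-\beta}\vfi_\ve)}| \le \|f\|_{L^\infty(E_{d_0/2})} \|X_\beta\psi\|_\infty \|X_{\alpha-\beta}\vfi_\ve\|_1.
$$
For $\beta = 0$ this contributes $O(\|\psi\|_\infty(\cH^{Q-\lambda}(E)+\ve))$, while every term with $|\beta|\ge 1$ carries a factor $\ve^{|\beta|}$ and therefore vanishes as $\ve \to 0$. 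Passing to the limit yields
$$
|\ang{\cL f, \psi}| \le C\,\|f\|_{L^\infty(E_{d_0/2})}\, \cH^{Q-\lambda}(E)\, \|\psi\|_\infty,
$$
with $C$ depending only on $\cL$ and $\alpha$.

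This uniform bound shows that $\cL f$ restricts to a bounded functional on $C_c^\infty(K)$ in the sup norm for each compact $K \subset \Omega$; by density of $C_c^\infty(K)$ in $C_c(K)$, it extends uniquely. The Riesz--Markov theorem then realizes $\cL f$ as a signed Radon measure $\mu$ on $\Omega$, and since $\ang{\cL f,\psi}=0$ for all $\psi \in C_c^\infty(\Omega \setminus E)$ (hence for all $\psi \in C_c(\Omega\setminus E)$), we conclude $\supp\mu \subset E$. I expect the main subtlety to be the Leibniz bookkeeping: the choices $\ell=\lambda$ and $p=1$ in Lemma \ref{harveypolking2} are precisely what make the exponent of $\ve$ in $\|X_{\alpha-\beta}\vfi_\ve\|_1$ equal to $|\beta|$, so that lower-order derivatives on the cutoff are negligible and only the $\beta=0$ term survives to give the clean sup-norm estimate needed for Riesz representation.
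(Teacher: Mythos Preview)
Your proposal addresses only part (5); for that part, your argument is correct and follows essentially the same route as the paper. Both proofs use Lemma \ref{harveypolking2} with $\ell=\lambda$ and $p=1$, expand $X_\alpha(\psi\vfi_\ve)$ by the Leibniz rule, observe that every term with $|\beta|\ge 1$ carries a factor $\ve^{|\beta|}$ and hence vanishes, and bound the surviving $\beta=0$ term by $\|f\|_\infty\|\psi\|_\infty\|X_\alpha\vfi_\ve\|_1\lesssim \|\psi\|_\infty(\cH^{Q-\lambda}(E)+\ve)$.

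The only cosmetic difference is in the last step. The paper phrases it as: the distributions $D_\ve=(-1)^\lambda f\,\cL\vfi_\ve$ converge weakly to $\cL f$ and are uniformly bounded in $L^1(\Omega)$, hence their weak limit is a measure. You instead pass to the limit in the inequality $|\ang{\cL f,\psi}|\le C\|\psi\|_\infty(\cH^{Q-\lambda}(E)+\ve)$ to get a sup-norm bound for the functional $\psi\mapsto\ang{\cL f,\psi}$ on $C_c^\infty(\Omega)$, extend by density, and invoke Riesz--Markov by name. These are two descriptions of the same mechanism; your version makes the role of Riesz representation explicit and also spells out why $\supp\mu\subset E$, which the paper leaves implicit.
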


\begin{proof}[Proof of Theorem \ref{lplocrem}]
We are first going to prove (1), (3) and (5) using Lemma \ref{harveypolking2} and following \cite{hp:pde}. Suppose that $\cH^{Q-\la p'}(E)<\infty$. Let a domain $\Omega \supset E$ and a function  $f \in L^p_\loc (\Omega)$ be given. Let also $\psi \in C^\infty_0(\Omega)$ and $\ve>0$. The Poincar\'e--Birkhoff--Witt Theorem implies that $\L$ is a linear combination of the operators $X_{\alpha_l}$ with $|\alpha_l|=\lambda$; therefore we can assume without loss of generality that $\L=X_\alpha$ with $|\alpha|=\la$. Since $\L f=0$ in $\Omega \setminus E$,  if $\vfi_\ve$ is as in Lemma~\ref{harveypolking2},
\begin{equation}
\label{adjoint}
\begin{split}
\ang{\L f, \psi}=\ang{\L f, \psi \vfi_\ve}=(-1)^{\la} \ang{f, X_\alpha (\psi \vfi_\ve)}.
\end{split}
\end{equation}
Since $X_\alpha(\psi \vfi_\ve)=\sum_{\beta \leq \alpha} c_{\alpha,\beta} X_\beta (\psi) \, X_{\alpha-\beta}(\vfi_\ve)$ Lemma \ref{harveypolking2} implies that 
$$\|X_{\alpha-\beta}(\vfi_\ve)\|_p' \lesssim \ve^{\lambda-|\alpha-\beta|}(\cH^{Q-\la p'}(E)+\ve)^{1/p'}$$
and by H\"older's inequality
\begin{equation}
\label{lpest}
\begin{split}
|\ang{f, X_\alpha (\psi \vfi_\ve)}|\leq \int_{E_\ve}|fX_\alpha (\psi \vfi_\ve)| \lesssim \|f\, \chi_{E_{\ve}}\,\|_p \ (\cH^{Q-\la p'}(E)+\ve)^{1/p'}.
\end{split}
\end{equation}
Since $f \in L^p_\loc(\Omega)$ and $m(E)=0$ (as $\dim(E)<Q$) the monotone convergence theorem implies that $\|f\, \chi_{E_{\ve}}\,\|_p \ra 0$ as $\ve \ra 0$. Hence $\ang{\cL f, \psi}=0$, which means that $f$ is a distributional solution to $\cL f=0$ in $\Omega$. Therefore by hypoellipticity, $f \in C^\infty(\Omega)$ and $\cL f=0$ in $\Omega$. Hence $E$ is removable for $L^p_\loc$ $\cL$-solutions. 

Let $f \in L^\infty_\loc (\Omega)$, $\psi$ and $\vfi_\ve$ as in the proof of (1). The proof of (3) follows in the same manner noticing that as in (\ref{lpest}) Lemma \ref{harveypolking2} implies
\begin{equation}
\label{linf}
\begin{split}
|\ang{f, X_\alpha (\psi \vfi_\ve)}| \leq \|f\, \chi_{E_{\ve}}\,\|_\infty \|X_\alpha (\psi \vfi_\ve)\|_1\ \lesssim \cH^{Q-\la}(E)+\ve.
\end{split}
\end{equation}

For the proof of (5), as in (\ref{adjoint}) we get
$$\ang{\L f, \psi}=|-1|^\lambda \ang{f\L  \vfi_\ve, \psi}+|-1|^\lambda\sum_{\beta < \alpha} c_{\alpha,\beta} \ang{f, X_\beta (\psi) \, X_{\alpha-\beta}(\vfi_\ve)}.$$
By Lemma \ref{harveypolking2}
$$|\ang{f, X_\beta (\psi) \, X_{\alpha-\beta}(\vfi_\ve)}|\lesssim \| X_{\alpha-\beta}(\vfi_\ve)\|_1 \lesssim \ve^{\lambda-|\alpha-\beta|}(\cH^{Q-\la}(E)+\ve)$$
hence the distributions
\begin{equation}\label{Deps}
D_\ve:=|-1|^\lambda f \L  \vfi_\ve
\end{equation}
converge weakly to $\L f$ in $\cD'(\Omega)$ as $\ve \ra 0$. Furthermore  $\| \L  \vfi_\ve\|_1 \leq C$ for $\ve>0$ hence the distributions $D_\ve$ are uniformly bounded in $L_1 (\Omega)$ and their weak limit is a measure.

The proof of (2) is as in Carleson's book \cite[\S 7]{car:exceptional}. We provide the details for completeness. By the assumption in (2) there exists $\eta> \max \{ Q-\la p' , 0 \}$ such that $\cH^{\eta}(E)>0$. 
Frostman's Lemma yields a non-trivial Borel measure $\mu$ with $\supp \mu \subset E$, satisfying
$$
\mu (B(x,r)) \le C r^{\eta} \quad \mbox{for all $x\in \G$ and $r>0$.}
$$
For $x \in \G$, let 
$$
u(x)=\int \|x\cdot y^{-1}\|^{\la-Q} \, d\mu (y) .
$$
Let $K \subset \G$ be compact and let $g$ be a nonnegative function such that $\int_\G g(y)^{p'}\,dy=1$. Now if $\theta:=\eta p^{-1}+(Q-\la p')p'^{-1}$ then $Q-\la p'<\theta<\eta$. Define $\vfi:\C \ra \R$ by
$$
\vfi (\zeta) = \int_K \int \frac{g(x)^{p'(1-\zeta)}}{ \|x\cdot y^{-1}\|^{\theta+(Q-\eta)\zeta}} \, d\mu(y) \, dx
$$
and notice that $\vfi$ is analytic. For $t \in \R$, and assuming without loss of generality that $\diam(E)+\diam(K)\leq 1$, we have
\begin{equation*}
\begin{split}
|\vfi(it)|&=\int_K g(x)^{p'} \left(\int \|x\cdot y^{-1}\|^{\theta} \, d \mu (y)\right) , dx\\
&\lesssim \int_K g(x)^{p'} \left(\sum_{j=0}^\infty
\frac{(2^{-j} )^\eta}{(2^{-(j+1)} )^\theta}\right) \, dx\leq M
\end{split}
\end{equation*}
and by Fubini's theorem,
\begin{equation*}
\begin{split}
|\vfi(1+it)|&=\int_K  \int \|x\cdot y^{-1}\|^{\theta+Q-\eta} \, d \mu (y) \, dx\\
&=\int_E \int_K \|x\cdot y^{-1}\|^{\theta+Q-\eta} \, dx \, d \mu (y)
\lesssim \int_E \sum_{j=0}^\infty
\frac{(2^{-j} )^Q}{(2^{-(j+1)} )^{\theta+Q-\eta}} \, dx \leq M.
\end{split}
\end{equation*}
By the maximum modulus principle, $|\vfi (\zeta)|\leq M$ for all $\zeta$ with $0<\Real(\zeta)<1$, and in particular,
$$
\vfi(p^{-1})=\int_K g(x) u(x) \, dx \leq M.
$$
Consequently, if $k$ is the fundamental solution of $\L$, then by (\ref{kernelbound}) we have that
$$
\int_K g(x)\ |k\ast\mu(x)| \, dx \lesssim M.
$$
Finally by duality 
$$
\int_K |k\ast\mu(x)|^p \, dx \lesssim M^p
$$
and $f=k \ast \mu \in L^p_\loc$. As in Theorem \ref{bmorem} Geller's Liouville theorem implies that $\L f \not\equiv 0$ in $\G$, which in turn implies that $E$ is not removable for $L^p_\loc$ $\cL$-solutions. This finishes the proof of (2).
The proof of (4) is much simpler, again using Frostman's lemma and integrating on annuli.
\end{proof}

In the following remarks we assume that $\cL$ is a $\lambda$-homogeneous left invariant partial differential operator on $\G$ such that $\cL$ and $\cL^t$ are hypoelliptic.

\begin{remarks}\label{extraremarks}
(1) Analogous results hold when $L^p_\loc$ is replaced with the Sobolev space $W^{k,p}_{H,\loc}$ consisting of $L^p_\loc$ functions whose iterated horizontal partial distributional derivatives of order at most $k$ exist as functions in $L^p_\loc$. Such results extend to Carnot groups the analogous Euclidean results due to Harvey and Polking \cite[Theorem 4.3]{hp:pde}. For instance, {\it any compact set $E$ with $\cH^{Q-(\la-k) p'}(E)<\infty$ is removable for $W^{k,p}_{H,\loc}$ $\cL$-solutions,} provided $1<p<\infty$.
Moreover, {\it any compact $E$ with $\cH^{Q-\la+k}(E)=0$ is removable for $W^{k,\infty}_{H,\loc}$ $\cL$-solutions.} Finally, {\it if $E$ is compact with $\cH^{Q-\la+k}(E)<\infty$, $\Omega \supset E$ is a domain and $f \in W^{k,\infty}_{H,\loc}(\Omega)$ such that $\L f=0$ in $\Omega \setminus E$, then $\L f$ is a measure supported in $E$.} The proofs are easy variants on the preceding argument; one may retain $k$ derivatives on the function $f$ and transfer only $\lambda-k$ derivatives onto the Harvey--Polking cutoff functions $\varphi_\eps$.

(2) Removability for continuous $\cL$-solutions can also be treated as in \cite[Theorem 4.2(b)]{hp:pde}. One obtains the conclusion that {\it compact sets $E$ with $\cH^{Q-\lambda}$ are removable for continuous $\cL$-solutions.} The rough idea of the proof is as follows. Returning to the proof of part (3) in Theorem \ref{lplocrem}, note that $(\cL\varphi_\eps)$ subconverges weakly to zero in $C(\Omega)'$. Multiplication by a continuous function $f$ is a continuous operation on $C(\Omega)'$, so also $(D_\eps)$ (defined as in \eqref{Deps}) subconverges weakly to zero. Since also $(D_\eps)$ converges weakly to $\cL f$ we conclude that $\cL f =0$ as desired.

(3) As in \cite[Theorem 4.3(d) and Theorem 4.5]{hp:pde} one also obtains results for the spaces $C^k_H$ and $C^{k,\delta}_H$ consisting, respectively, of functions on $\G$ whose iterated horizontal partial derivatives of order at most $k$ are continuous, or of functions in $C^k_H$ whose iterated horizontal partial derivatives of order equal to $k$ are $\delta$-H\"older continuous. In the former case, one has that {\it compact sets $E$ with $\cH^{Q-\lambda+k}(E)<\infty$ are removable for $C^k_H$ $\cL$-solutions.} In the latter case, for $0<\delta<1$, one has that {\it compact sets $E$ with $\cH^{Q-\lambda+k+\delta}(E)=0$ are removable for $C^{k,\delta}_H$ $\cL$-solutions.}

(4) As in the Euclidean setting, the analysis of removability for Lipschitz $\cL$-solutions is more subtle. We will pursue this topic in more detail in a subsequent work \cite{cmt:lipschitz}.
\end{remarks}

We conclude with a few questions for further study.

\begin{remark}
Removability for PDE solutions, as considered in this paper, is traditionally quantified in terms of an appropriate notion of capacity. For Painlev\'e's problem, the relevant capacity is the well known {\it analytic capacity}. In \cite{hp:capacity}, Harvey and Polking develop a capacitary framework which encodes their removability results from \cite{hp:pde}. We have not developed the capacity theory associated to our removability theorems, although such a theory should be straightforward to implement.
\end{remark}

\begin{remark}
Moving even beyond the sub-Riemannian framework, it is natural to inquire about similar removability questions in the general setting of metric measure spaces. Second-order PDE in divergence form can be developed using a weak formulation and the notion of Cheeger differentiability in doubling metric measure spaces admitting a Poincar\'e inequality. See \cite{ch:lipschitz} for Cheeger's differentiation theory, and \cite{bb:p-theory} for the basic machinery of second-order PDE in such spaces. We anticipate that some of our results carry over into this framework.
\end{remark}

\bibstyle{acm}
\bibliography{biblio-carnot-removable}
\end{document}